\pgfplotsset{compat=1.6}
\newcommand{\rus}[1]{\selectlanguage{russian}{\fontfamily{cmr}\fontsize{10pt}{12pt}\selectfont #1}\selectlanguage{USenglish}}
\newcommand{\bibrus}[1]{\selectlanguage{russian}{\fontfamily{cmr}\fontsize{8pt}{10pt}\selectfont #1}\selectlanguage{USenglish}}
\newcommand{\enquote}[1]{``#1''}    
\newcommand{\ii}{\mathrm{i}}
\newcommand{\ee}{\mathrm{e}}
\newcommand{\dd}{\mathrm{d}}
\newcommand{\NN}{\mathbb{N}}
\newcommand{\RR}{\mathbb{R}}
\newcommand{\ZZ}{\mathbb{Z}}
\newcommand{\CC}{\mathbb{C}}
\newcommand{\abs}[1]{\left\lvert#1\right\rvert}
\newcommand{\mnorm}[1]{\left\lVert#1\right\rVert}
\newcommand{\setn}[1]{\left\{#1\right\}}
\newcommand{\setcond}[2]{\left\{#1 \::\: #2\right\}}
\newcommand{\defeq}{\mathrel{\mathop:}=}
\newcommand{\lr}[1]{\!\left(#1\right)}
\newcommand{\skpr}[2]{\left\langle#1 \,\middle\vert\, #2\right\rangle}
\newcommand{\norel}{\mathrel{\phantom{=}}}
\newcommand{\eqdef}{=\mathrel{\mathop:}}
\newcommand{\floor}[1]{\left\lfloor #1\right\rfloor}
\newcommand{\smpl}{\Xi}
\newcommand{\sph}{\mathbb{S}}
\newcommand{\prt}{\mathcal{Z}}
\newcommand{\Z}{Z}
\newcommand{\TT}{\mathbb{T}}
\newcommand{\PP}{\mathbb{P}}
\DeclareMathOperator*{\esssup}{ess\,sup}
\DeclareMathOperator{\inte}{int}
\let \eps \varepsilon
\let \piup \uppi
\theoremstyle{plain} 
\newtheorem{Satz}{Theorem}[section]\newtheorem{Kor}[Satz]{Corollary}
\newtheorem{Lem}[Satz]{Lemma}
\newtheorem{Prop}[Satz]{Proposition}
\newtheorem*{thmmatrix}{Theorem~3.2}
\newtheorem*{thmdeterm}{Theorem~4.1}
\newtheorem*{thmcoupon}{Theorem~5.2}
\theoremstyle{definition}
\newtheorem{Bem}[Satz]{Remark}
\crefname{Satz}{Theorem}{Theorems}
\crefname{Prop}{Proposition}{Propositions}
\crefname{Lem}{Lemma}{Lemmas}
\crefname{Bem}{Remark}{Remarks}
\crefname{Kor}{Corollary}{Corollaries}
\crefname{Def}{Definition}{Definitions}
\numberwithin{equation}{section}
\renewcommand*{\eqref}[1]{\hyperref[{#1}]{\textup{\tagform@{\ref*{#1}}}}}
\newcommand\pgfmathsinandcos[3]{%
  \pgfmathsetmacro#1{sin(#3)}%
  \pgfmathsetmacro#2{cos(#3)}%
}
\newcommand\LongitudePlane[3][current plane]{%
  \pgfmathsinandcos\sinEl\cosEl{#2} 
  \pgfmathsinandcos\sint\cost{#3} 
  \tikzset{#1/.style={cm={\cost,\sint*\sinEl,0,\cosEl,(0,0)}}}
}
\newcommand\LatitudePlane[3][current plane]{%
  \pgfmathsinandcos\sinEl\cosEl{#2} 
  \pgfmathsinandcos\sint\cost{#3} 
  \pgfmathsetmacro\yshift{\cosEl*\sint}
  \tikzset{#1/.style={cm={\cost,0,0,\cost*\sinEl,(0,\yshift)}}} %
}
\newcommand\DrawLongitudeCircle[2][1]{
  \LongitudePlane{\angEl}{#2}
  \tikzset{current plane/.prefix style={scale=#1}}
  \pgfmathsetmacro\angVis{atan(sin(#2)*cos(\angEl)/sin(\angEl))} %
  \draw[current plane] (\angVis:1) arc (\angVis:\angVis+180:1);
  \draw[current plane,dashed] (\angVis-180:1) arc (\angVis-180:\angVis:1);
}
\newcommand\LatitudeCircle[2][1]{
  \LatitudePlane{\angEl}{#2}
  \tikzset{current plane/.prefix style={scale=#1}}
  \pgfmathsetmacro\sinVis{sin(#2)/cos(#2)*sin(\angEl)/cos(\angEl)}
  \pgfmathsetmacro\angVis{asin(min(1,max(\sinVis,-1)))}
  \path[current plane] (\angVis:1) arc (\angVis:-\angVis-180:1);
  \path[current plane,dashed] (180-\angVis:1) arc (180-\angVis:\angVis:1);
}
\begin{document}
\allowdisplaybreaks

\title[MZ inequalities for scattered and random data on the sphere]{Marcinkiewicz--Zygmund inequalities for scattered and random data on the $q$-sphere}


\author{Frank Filbir}
\address[F.~Filbir]{Mathematical Imaging and Data Analysis, Institute of Biological and Medical Imaging, Helmholtz Center Munich, Ingolstädter Landstraße 1, 85764 Neuherberg, Germany}
\email{filbir@helmholtz-muenchen.de}
\thanks{}

\author{Ralf Hielscher}
\address[R.~Hielscher]{Faculty of Mathematics and Computer Science, Technische Universität Bergakademie Freiberg, 09596 Freiberg, Germany}
\email{ralf.hielscher@math.tu-freiberg.de}
\thanks{}

\author{Thomas Jahn}
\address[T.~Jahn]{Mathematical Institute for Machine Learning and Data Science (MIDS), Catholic University of Eichstätt--Ingolstadt (KU), Auf der Schanz 49, 85049 Ingolstadt, Germany}
\email{thomas.jahn@ku.de}
\thanks{}

\author{Tino Ullrich}
\address[T.~Ullrich]{Faculty of Mathematics, Technische Universität Chemnitz, 09107 Chemnitz, Germany}
\email{tino.ullrich@mathematik.tu-chemnitz.de}
\thanks{}

\subjclass[2010]{33C55, 41A17, 43A90}


\keywords{coupon collector problem, discretization, Marcinkiewicz--Zygmund inequality, random matrix, Riesz--Thorin interpolation theorem, scattered data approximation, spherical harmonics}
\date{\today}

\begin{abstract}
The recovery of multivariate functions and estimating their integrals from finitely many samples is one of the central tasks in modern approximation theory.
Marcinkiewicz--Zygmund inequalities provide answers to both the recovery and the quadrature aspect.
In this paper, we put ourselves on the $q$-dimensional sphere $\sph^q$, and investigate how well continuous $L_p$-norms of polynomials $f$ of maximum degree $n$ on the sphere $\sph^q$ can be discretized by positively weighted $L_p$-sum of finitely many samples, and discuss the relationship between the offset between the continuous and discrete quantities, the number and distribution of the (deterministic or randomly chosen) sample points $\xi_1,\ldots,\xi_N$ on $\sph^q$, the dimension $q$, and the polynomial degree $n$.
\end{abstract}

\maketitle
\section{Introduction}
A typical problem in science is to develop a model for a hidden process from observational data.
More precisely, we are given a set of measurements $\{(x_1,f_1),\ldots,$ $(x_N,f_N)\}$, where we assume that the set of sampling nodes $\smpl=\setn{x_1,\ldots,x_N}$ is a finite subset of a compact metric measure space $\mathbb{X}$ with measure $\mu$ and metric $\rho$.
The vector of sampling values $S(f) = (f_1,\ldots,f_N)$ has real or complex components.
It is usually assumed that the data generating process can be described by a complex-valued function $f$ defined on $\mathbb{X}$, viz.\ $f(x_j)=f_j$, or at least $f(x_j)\approx f_j$.
In order to develop a mathematical method to approximate the function $f$ from its samples it is necessary to make suitable assumptions regarding the nature of $f$.
That is, we assume that $f$ belongs to a (smoothness) function space at least embedded into $C(\mathbb{X})$ in order to make function evaluation available. 
The question of which function space is suitable is not primarily a mathematical problem but depends more on the specific application.
The mathematical problem is to determine an approximation $P \in \Pi_M$ to $f$ from the given data with a certain accuracy, and to give error bounds for this approximation. A common strategy is to \enquote{project} $f$ onto this finite-dimensional subspace $\Pi_M$ spanned by the first $M$ elements of an orthonormal basis $\setcond{\phi_n}{n\in\NN}$ of $L_2(\mathbb{X},\mu)$ by only using the above mentioned discrete information.
This is usually done via interpreting the discrete information as \enquote{noisy} samples of a model function $\tilde{f} \in \Pi_M$ and using a least squares approach for the recovery of the coefficients as soon as the sampling operator $\mathcal{S}:\Pi_M\to\RR^N$, $\mathcal{S}(P)=(P(x_1),\ldots,P(x_N))$ is bounded and boundedly invertible on its range, i.e.,
\begin{equation}\label{eq:In2}
c_1\mnorm{P}_p\leq\mnorm{\mathcal{S}(P)}_{\omega,p,\RR^N}\leq c_2\mnorm{P}_p
\end{equation}
for all $P\in\Pi_M$, where $0<c_1\leq c_2,\ 1\leq p\leq\infty$, and $\mnorm{\cdot}_ {\omega,p,\RR^N}$ is a certain weighted discrete $p$-norm on $\RR^N$.


Broadly speaking, inequality \eqref{eq:In2} is about the discretization of $L_p$-norms on $\Pi_n^q$, i.e., polynomials on $\sph^q$ with maximum degree $n$, using point samples.
The present paper is concerned with such inequalities on the $q$-dimensional unit sphere $\mathbb{X} = \sph^q$ in $\RR^{q+1}$.
Our results cover variants of this \emph{Marcinkiewicz--Zygmund inequality} for both scattered (i.e., deterministically given) sampling points and randomly chosen ones.

For scattered data, we establish the following $L_p$-result by applying Riesz--Thorin interpolation to the boundary cases $p=1$ and $p=\infty$. Here, $L_p$-norms are computed with respect to the surface area measure $\mu_q$ of $\sph^q$, and the weights in the discretized norm are given by the surface areas of the patches $Z_\xi$ of a partition of $\sph^q$, each of which belongs to a sampling point $\xi\in\smpl$.
The geometry of the partition enters through the partition norm $\mnorm{\prt}$, i.e., the maximum geodesic diameter of its patches.
\begin{thmdeterm}
Let $\eta\in \mathopen]0,1\mathclose[$, and let $(\smpl,\prt)$ be a compatible pair consisting of a finite set $\smpl\subseteq \sph^q$ and a partition $\prt$ of $\sph^q$.
Assume that
\begin{equation*}
5C_q(n+q^2)\mnorm{\prt}\leq \eta
\end{equation*}
with $C_q\defeq 2(3+3^{q/2}\,\piup)$.
Then, for all $p\in[1,\infty]$ and every $P\in\Pi_n^q$, we have
\begin{equation*}
(1-\eta)\mnorm{P}_{\mu_q,p}\leq\lr{\sum_{\xi\in \smpl}\mu_q(Z_\xi)\abs{P(\xi)}^p}^{\frac{1}{p}}\leq (1+\eta)\mnorm{P}_{\mu,p}.
\end{equation*}
\end{thmdeterm}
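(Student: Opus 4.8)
The plan is to collapse the two-sided inequality into a single one-sided estimate for a linear operator and then interpolate. For $P\in\Pi_n^q$ let $P_{\prt}\defeq\sum_{\xi\in\smpl}P(\xi)\,\mathbf 1_{\Z_\xi}$ be the step function equal to $P(\xi)$ on the patch $\Z_\xi$. Since the patches partition $\sph^q$,
\[
\mnorm{P_{\prt}}_{\mu_q,p}^p=\sum_{\xi\in\smpl}\int_{\Z_\xi}\abs{P(\xi)}^p\dd\mu_q=\sum_{\xi\in\smpl}\mu_q(\Z_\xi)\abs{P(\xi)}^p,
\]
so the two inequalities in the statement are precisely $(1-\eta)\mnorm{P}_{\mu_q,p}\le\mnorm{P_{\prt}}_{\mu_q,p}\le(1+\eta)\mnorm{P}_{\mu_q,p}$; by the reverse triangle inequality in $L_p(\sph^q,\mu_q)$ it suffices to prove the single bound $\mnorm{P-P_{\prt}}_{\mu_q,p}\le\eta\,\mnorm{P}_{\mu_q,p}$ for all $P\in\Pi_n^q$ and $p\in[1,\infty]$.

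First I would settle the endpoints $p=\infty$ and $p=1$ (which one may also expect to be available from the preceding sections). For $p=\infty$: compatibility of $(\smpl,\prt)$ means every $x\in\sph^q$ lies in some $\Z_\xi$ with geodesic distance $\rho(x,\xi)\le\mnorm{\prt}$, and integrating the geodesic directional derivative of $P$ along the minimizing geodesic from $x$ to $\xi$, together with the Bernstein-type inequality for spherical polynomials — the geodesic derivative of $P\in\Pi_n^q$ is bounded by a constant times $(n+q^2)\mnorm{P}_{\mu_q,\infty}$, which is where $C_q$ enters — gives $\abs{P(x)-P(\xi)}\le\mnorm{\prt}\cdot\mathrm{const}\cdot(n+q^2)\mnorm{P}_{\mu_q,\infty}$, hence $\mnorm{P-P_{\prt}}_{\mu_q,\infty}\le\eta\mnorm{P}_{\mu_q,\infty}$. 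For $p=1$ one writes $\mnorm{P-P_{\prt}}_{\mu_q,1}=\sum_{\xi}\int_{\Z_\xi}\abs{P(x)-P(\xi)}\dd\mu_q(x)$, dominates the integrand by $\int_0^{\rho(x,\xi)}\abs{\frac{\dd}{\dd t}P(\gamma_{x\to\xi}(t))}\dd t$, and — after passing to geodesic polar coordinates around each $\xi$ and interchanging the two integrations — controls the resulting tube integrals by $\mnorm{\prt}$ times the integral of $\abs{\nabla P}$ over a bounded-overlap enlargement of the patches; the cap-volume ratios needed here are what produce the dimensional factor $3^{q/2}$ in $C_q$. An $L_1$ Bernstein--Nikolskii inequality $\mnorm{\nabla P}_{\mu_q,1}\le\mathrm{const}\cdot(n+q^2)\mnorm{P}_{\mu_q,1}$ then closes this case.

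To pass from $p\in\{1,\infty\}$ to all $p$, note that $P\mapsto P-P_{\prt}$ is linear on $\Pi_n^q$, but Riesz--Thorin needs an operator on the full scale $L_1+L_\infty$. I would precompose with a de la Vall\'ee Poussin--type filter $V_n$ on $\sph^q$ that reproduces $\Pi_n^q$, outputs polynomials of degree $\le 2n$, and is bounded on $L_p(\sph^q,\mu_q)$ uniformly in $p$, and set $Rf\defeq V_nf-(V_nf)_{\prt}$; this is a well-defined linear operator on $L_1+L_\infty$ with $RP=P-P_{\prt}$ for $P\in\Pi_n^q$. Applying the two endpoint estimates to $V_nf$ (of degree $\le 2n$) and using $\mnorm{V_n}_{L_p\to L_p}\lesssim 1$ yields $\mnorm{R}_{L_1\to L_1}\le\eta$ and $\mnorm{R}_{L_\infty\to L_\infty}\le\eta$ under the hypothesis $5C_q(n+q^2)\mnorm{\prt}\le\eta$, the factor $5$ accounting for the passage to degree $\le 2n$ and for the operator norm of $V_n$. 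Riesz--Thorin interpolation then gives $\mnorm{R}_{L_p\to L_p}\le\eta$ for every $p\in[1,\infty]$, hence $\mnorm{P-P_{\prt}}_{\mu_q,p}\le\eta\mnorm{P}_{\mu_q,p}$ on $\Pi_n^q$, and the reverse triangle inequality from the first paragraph completes the proof.

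I expect the main obstacle to be the $p=1$ endpoint: turning the pointwise geodesic estimate into an $L_1$ bound requires a clean decomposition of the geodesic tubes joining the patches to their sampling points with controlled overlap, together with cap-volume comparisons sharp enough to keep the dimensional dependence at $3^{q/2}$ and the degree dependence at $n+q^2$. A secondary, more bookkeeping-type issue is tracking all universal constants through the filter $V_n$ so that the single hypothesis $5C_q(n+q^2)\mnorm{\prt}\le\eta$ is strong enough at both endpoints simultaneously, and hence, after interpolation, at every $p$.
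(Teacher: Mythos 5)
Your overall architecture coincides with the paper's: the reduction via the reverse triangle inequality to $\mnorm{P-P_\prt}_{\mu_q,p}\le\eta\mnorm{P}_{\mu_q,p}$ is exactly the paper's inequality \eqref{eq:NormEst}, and your operator $Rf=V_nf-(V_nf)_\prt$ is literally the paper's $T_{\smpl,\prt,n}$ from \eqref{eq:linear-operator} (with $V_n$ the convolution with the generalized de la Vall\'ee Poussin kernel $v_n$ of \cite{FilbirTh2008}). Recognizing that this operator lives on all of $L_1+L_\infty$, so that Riesz--Thorin is legitimately applicable, is the paper's main structural point, and you have it. The $p=\infty$ endpoint via the sup-norm Bernstein inequality for the gradient of $V_nf\in\Pi_{2n}^q$, combined with $\mnorm{V_n}_{\infty\to\infty}\le 3^{q/2}$ from \eqref{eq:ell-1-dlvp}, is a correct (and arguably cleaner) alternative to the paper's kernel computation in \cref{thm:bounded-operator-ell-infty}.

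The genuine gap is the $p=1$ endpoint, and it is larger than the ``bookkeeping'' you anticipate. First, the $L_1$ Bernstein inequality $\mnorm{\nabla P}_{\mu_q,1}\lesssim n\mnorm{P}_{\mu_q,1}$ on $\sph^q$ is not an elementary import: its standard proofs run through kernel-derivative estimates of precisely the kind the paper develops in \cref{thm:dvlp}, so invoking it outsources rather than avoids the hard part. Second, and more seriously, the step from $\sum_\xi\int_{\Z_\xi}\int_0^{d(x,\xi)}\abs{\nabla P(\gamma_{\xi\to x}(t))}\dd t\,\dd\mu_q(x)$ to $\mnorm{\prt}\cdot\mnorm{\nabla P}_{\mu_q,1}$ fails pointwise: in geodesic polar coordinates around $\xi$ the surface measure carries the Jacobian $\sin(t)^{q-1}$, which degenerates at the patch center, so after swapping the order of integration you need $\int_t^{r}\sin(\rho)^{q-1}\dd\rho\lesssim\mnorm{\prt}\sin(t)^{q-1}$, which is false for $t$ near $0$; gradient mass concentrated near the centers $\xi$ (at scales below $\mnorm{\prt}$) is not controlled by the $L_1$ norm of $\nabla P$ alone, and ruling such concentration out for polynomials requires an additional Nikolskii-type or sub-mean-value argument you have not supplied. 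The paper confronts the same degeneracy in \cref{thm:bounded-operator-ell-1} and resolves it by a Schur-type bound $\mnorm{T_{\smpl,\prt,n}}_{1\to1}\le\esssup_y\int_{\sph^q}\sum_\xi\mathbf{1}_{\Z_\xi}(x)\abs{v_n(x\cdot y)-v_n(\xi\cdot y)}\dd\mu_q(x)$, splitting the polar integral into the two caps of width $2\mnorm{\prt}$ (where the $L_\infty$ bound \eqref{eq:ell-infty-dlvp} on $(v_n\circ\cos)^\prime$ absorbs the degeneracy into a higher-order term $((n+q^2)\mnorm{\prt})^{q+1}$) and the equatorial band (where $\sin\tau\le 2\sin t$ restores the correct Jacobian and \cref{thm:dvlp} applies). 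To close your version you would need an analogous two-regime treatment of each patch, or else switch to the paper's kernel-level estimate.
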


Inequalities of this type have been considered by Marcinkiewicz and Zygmund in their seminal paper \cite{MarcinkiewiczZy1937} in relation to interpolation problems for functions defined on the torus $\mathbb{X}=\sph^1$ (resp.\ $2\piup$-periodic functions) on equidistant nodes $x_j=j/N$. 
More precisely, the authors of \cite{MarcinkiewiczZy1937} proved that for every trigonometric polynomial $P$ of maximal degree $n$ and every $1\leq p\leq\infty$ the following chain of inequalities hold
\begin{equation}\label{eq:In0}
\lr{1-\eps} \lr{\frac{1}{N}\sum_{j=1}^N\abs{P(x_j)}^p}^{\frac{1}{p}}\leq \mnorm{P}_p
\leq \lr{1+\eps}\lr{\frac{1}{N}\sum_{j=1}^N\abs{P(x_j)}^p}^{\frac{1}{p}},
\end{equation}
provided that the number $N$ of sampling points is strictly greater than $(1+\frac{1}{\eps})2n$ for some $\eps>0$.
In this sense, \cref{thm:deterministic-mz} can be considered a generalization of \eqref{eq:In0} on the $q$-sphere, which provides exact constants. 

For sampling points drawn randomly, i.e., independently and identically distributed according to the normalized surface area measure $\sigma_q$, the following $L_2$-version of the Marcinkiewicz--Zygmund inequality will be derived using singular value estimates for random matrices \cite{Tropp2012}.
\begin{thmmatrix}
Let $\eta,\eps\in \mathopen]0,1\mathclose[$.
Suppose $\xi_1,\ldots,\xi_N\in \sph^q$ are drawn i.i.d.\ according to $\sigma_q$.
If 
\begin{equation*}
N>\log\lr{\frac{2\dim(\Pi_n^q)}{\eps}}\frac{3\dim(\Pi_n^q)}{\eta^2},
\end{equation*}
then with probability exceeding $1-\eps$ with respect to the product measure $\PP=\sigma_q^{\otimes N}$, we have
\begin{equation*}
(1-\eta)\mnorm{P}_{\sigma_q,2}^2\leq \frac{1}{N}\sum_{j=1}^N \abs{P(\xi_j)}^2\leq (1+\eta)\mnorm{P}_{\sigma_q,2}^2
\end{equation*}
for all $P\in\Pi_n^q$.
\end{thmmatrix}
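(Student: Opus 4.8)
The plan is to recast the asserted two-sided bound as an operator-norm estimate for a random Gram matrix and then apply the matrix Chernoff inequality from \cite{Tropp2012}. Set $D\defeq\dim(\Pi_n^q)$, fix an orthonormal basis $Y_1,\dots,Y_D$ of $\Pi_n^q$ for the inner product of $L_2(\sph^q,\sigma_q)$, and for $\xi\in\sph^q$ put $v(\xi)\defeq(\overline{Y_1(\xi)},\dots,\overline{Y_D(\xi)})^{\top}\in\CC^D$. Expanding $P=\sum_{k=1}^D c_kY_k$ we get $\mnorm{P}_{\sigma_q,2}^2=\mnorm{c}_2^2$ and $\abs{P(\xi_j)}^2=c^{*}v(\xi_j)v(\xi_j)^{*}c$, so that
\[
\frac1N\sum_{j=1}^N\abs{P(\xi_j)}^2=c^{*}M_N\,c,\qquad M_N\defeq\frac1N\sum_{j=1}^N v(\xi_j)v(\xi_j)^{*}.
\]
Because the desired inequality is to hold for \emph{all} $P\in\Pi_n^q$, it is equivalent to the spectral estimate $\mnorm{M_N-I_D}\le\eta$, i.e.\ $\lambda_{\min}(M_N)\ge 1-\eta$ and $\lambda_{\max}(M_N)\le 1+\eta$, which is precisely the kind of conclusion matrix concentration is built to deliver.

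The next step is to identify the parameters entering the matrix Chernoff bound for the independent, Hermitian, positive semidefinite summands $v(\xi_1)v(\xi_1)^{*},\dots,v(\xi_N)v(\xi_N)^{*}$. Orthonormality gives $\mathbb{E}[v(\xi_j)v(\xi_j)^{*}]=I_D$, since the $(k,\ell)$-entry is $\int_{\sph^q}\overline{Y_k}\,Y_\ell\,\dd\sigma_q=\delta_{k\ell}$; hence $\mathbb{E}\bigl[\sum_j v(\xi_j)v(\xi_j)^{*}\bigr]=N\,I_D$, so that the parameters $\mu_{\min},\mu_{\max}$ of Tropp's theorem both equal $N$. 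For the per-summand bound we use $\lambda_{\max}\bigl(v(\xi)v(\xi)^{*}\bigr)=\mnorm{v(\xi)}_2^2=\sum_{k=1}^D\abs{Y_k(\xi)}^2=D$ for every $\xi\in\sph^q$: by rotational invariance the diagonal of the reproducing kernel of $\Pi_n^q$ is constant in $\xi$, and integration against $\sigma_q$ shows the constant is $\sum_k\mnorm{Y_k}_{\sigma_q,2}^2=D$ (the addition theorem for spherical harmonics). Thus the per-summand bound is $R=D$.

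Feeding $\mu_{\min}=\mu_{\max}=N$, $R=D$, and ambient dimension $D$ into the matrix Chernoff bound yields
\[
\PP\setn{\lambda_{\min}(NM_N)\le(1-\eta)N}\le D\Bigl[\tfrac{\ee^{-\eta}}{(1-\eta)^{1-\eta}}\Bigr]^{N/D},\qquad
\PP\setn{\lambda_{\max}(NM_N)\ge(1+\eta)N}\le D\Bigl[\tfrac{\ee^{\eta}}{(1+\eta)^{1+\eta}}\Bigr]^{N/D}.
\]
Using the elementary estimates $\ee^{-\eta}(1-\eta)^{-(1-\eta)}\le\ee^{-\eta^2/2}$ and $\ee^{\eta}(1+\eta)^{-(1+\eta)}\le\ee^{-\eta^2/3}$ on $\mathopen]0,1\mathclose[$, both right-hand sides are at most $D\,\ee^{-\eta^2N/(3D)}$, and a union bound gives $\PP\setn{\mnorm{M_N-I_D}>\eta}\le 2D\,\ee^{-\eta^2N/(3D)}$. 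This is strictly smaller than $\eps$ precisely when $N>\tfrac{3D}{\eta^2}\log(2D/\eps)$, i.e.\ under the hypothesis; on the complementary event the Marcinkiewicz--Zygmund inequality holds for all $P\in\Pi_n^q$, and that event has probability exceeding $1-\eps$.

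The argument is largely routine once the right tools are assembled. The key point --- the one place a weaker estimate would be costly --- is that $\mnorm{\sum_k\abs{Y_k(\cdot)}^2}_\infty$ equals \emph{exactly} $D=\dim(\Pi_n^q)$ rather than something larger: this is what makes the per-summand bound $R=D$, and hence the sample count proportional to $\dim(\Pi_n^q)$ up to the logarithm. The remaining work is bookkeeping: matching Tropp's tail to the precise constant $3$ and the factor $2D$ inside the logarithm, and verifying the two scalar exponential inequalities on $\mathopen]0,1\mathclose[$.
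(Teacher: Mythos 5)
Your proposal is correct and follows essentially the same route as the paper: the paper also reduces the statement to eigenvalue bounds for the Gram matrix $\frac1N L^*L$ of a $\sigma_q$-orthonormal basis, uses the addition formula to get the constant diagonal value $\dim(\Pi_n^q)$, and invokes Tropp's matrix Chernoff bound (via the restatement in \cref{thm:moeller-ullrich}) together with the same $\eta^2/3$ scalar estimates and a union bound. The only cosmetic difference is that you apply Tropp's inequality directly to the rank-one summands rather than quoting the Moeller--Ullrich formulation.
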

A combination of our deterministic Marcinkiewicz--Zygmund inequality in \cref{thm:deterministic-mz}, the well-known coupon collector problem from probability theory \cite[p.~36]{Doerr2020}, and partitioning results for $\sph^q$ \cite[Theorem~3.1.3]{Leopardi2007} leads us to the following $L_p$-Marcinkiewicz--Zygmund inequality for sets of random sampling points.
\begin{thmcoupon}
Let $n,q\in\NN$, $\eta,\eps\in \mathopen]0,1\mathclose[$, $\omega_q\defeq \mu_q(\sph^q)$, and $\alpha_q\defeq 8\lr{\frac{\omega_qq}{\omega_{q-1}}}^{\frac{1}{q}}$.
Choose $N\in\NN$ large enough such that
\begin{equation*}
5C_q\alpha_q\lr{\frac{N}{2\log(\frac{N}{\eps})}}^{-\frac{1}{q}}(n+q^2)<\eta.
\end{equation*}
Draw points $\xi_1,\ldots,\xi_N\in\sph^q$ i.i.d.\ according to $\sigma_q$.
Then with probability $\geq 1-\frac{1}{N}$ with respect to the product measure $\PP=\sigma_q^{\otimes N}$, there exists weights $w_1,\ldots,w_N>0$ such that $\sum_{j=1}^N w_j=1$ and 
\begin{equation*}
(1-\eta)\mnorm{P}_{\sigma_q,p}\leq \lr{\sum_{j=1}^N w_j \abs{P(\xi_j)}^p}^\frac{1}{p}\leq (1+\eta)\mnorm{P}_{\sigma_q,p}
\end{equation*}
for all $p\in [1,\infty]$ and all $P\in\Pi_n^q$.
\end{thmcoupon}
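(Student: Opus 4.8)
The plan is to reduce the statement to the deterministic bound \cref{thm:deterministic-mz} by producing, with probability $\geq 1-1/N$ over the draw, a compatible pair $(\{\xi_1,\dots,\xi_N\},\prt)$ whose partition norm meets the threshold of that theorem. Two inputs are needed: an equal-area partition of $\sph^q$ into few cells of small geodesic diameter \cite[Theorem~3.1.3]{Leopardi2007}, and the coupon-collector tail estimate \cite[p.~36]{Doerr2020} guaranteeing that every cell is hit. Set $M\defeq\floor{N/(2\log(N/\eps))}$; one may assume $N\geq2$, since for $N=1$ the asserted probability bound is vacuous. Because $5C_q\alpha_q(n+q^2)\geq1$ (indeed $C_q\geq16$, $\alpha_q\geq8$, $n+q^2\geq2$), the hypothesis forces $N/(2\log(N/\eps))>1$, so $M\geq1$; moreover $\floor{x}\geq x/2$ for $x\geq1$ gives $M\geq N/(4\log(N/\eps))$, and $2\log(N/\eps)>1$ (as $N\geq2$, $\eps<1$) gives $M\leq N/(2\log(N/\eps))\leq N$. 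Leopardi's theorem now provides a partition $\{Z_1,\dots,Z_M\}$ of $\sph^q$ with $\mu_q(Z_i)=\omega_q/M$ and geodesic diameter at most $\tfrac12\alpha_q M^{-1/q}$; combining this with $M\geq N/(4\log(N/\eps))$ and $2^{1/q}\leq2$ shows $\diam(Z_i)\leq\alpha_q\lr{N/(2\log(N/\eps))}^{-1/q}$ for each $i$ (the factor $8$ in $\alpha_q$ being exactly the slack that absorbs the floor in the definition of $M$).

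The coupon-collector step comes next. As the $\xi_j$ are i.i.d.\ $\sigma_q$ with $\sigma_q(Z_i)=1/M$, the probability that some cell receives no sample point is at most $M(1-1/M)^N\leq M\ee^{-N/M}$, and $N/M\geq 2\log(N/\eps)\geq 2\log N$ (using $\eps<1$) together with $M\leq N$ bounds this by $N\cdot N^{-2}=1/N$. Discarding in addition the $\PP$-null events that two of the $\xi_j$ coincide or that some $\xi_j$ lies on the boundary of a cell, one is left with an event of probability $\geq1-1/N$ on which each $Z_i$ contains exactly $k_i\geq1$ of the pairwise distinct points $\xi_j$, all in its interior. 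On this event, refine the partition: inside each $Z_i$ choose a Borel decomposition into $k_i$ pieces of positive $\mu_q$-measure, each containing exactly one of these points in its interior — for instance the Voronoi cells of the $k_i$ points intersected with $Z_i$ — and let $Y_j$ denote the piece containing $\xi_j$. Then $\prt\defeq\{Y_1,\dots,Y_N\}$ is a partition of $\sph^q$ with $\xi_j\in Y_j$, so $(\{\xi_1,\dots,\xi_N\},\prt)$ is a compatible pair, and $\mnorm{\prt}=\max_j\diam(Y_j)\leq\max_i\diam(Z_i)\leq\alpha_q\lr{N/(2\log(N/\eps))}^{-1/q}$.

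It follows from the hypothesis on $N$ that $5C_q(n+q^2)\mnorm{\prt}\leq 5C_q\alpha_q\lr{N/(2\log(N/\eps))}^{-1/q}(n+q^2)<\eta$, so \cref{thm:deterministic-mz} applies to the compatible pair $(\{\xi_1,\dots,\xi_N\},\prt)$ and yields, for all $p\in[1,\infty]$ and every $P\in\Pi_n^q$,
\begin{equation*}
(1-\eta)\mnorm{P}_{\mu_q,p}\leq\lr{\sum_{j=1}^N\mu_q(Y_j)\abs{P(\xi_j)}^p}^{\frac1p}\leq(1+\eta)\mnorm{P}_{\mu_q,p}.
\end{equation*}
Put $w_j\defeq\mu_q(Y_j)/\omega_q$. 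Then $w_j>0$ and $\sum_{j=1}^N w_j=\mu_q(\sph^q)/\omega_q=1$, and dividing the displayed chain by $\omega_q^{1/p}$ and using $\mnorm{\cdot}_{\mu_q,p}=\omega_q^{1/p}\mnorm{\cdot}_{\sigma_q,p}$ gives the claim (for $p=\infty$ both outer norms coincide with their $\sigma_q$-versions and the middle term equals $\max_j\abs{P(\xi_j)}$, which is insensitive to the positive weights). I expect the one genuinely delicate point to be the constant accounting in the middle step: $M$ has to be small enough for the coupon-collector bound to give failure probability $\leq1/N$, yet large enough for Leopardi's diameter bound to clear the threshold of \cref{thm:deterministic-mz}, and the definitions of $\alpha_q$ and of $M$ are calibrated so that $M=\floor{N/(2\log(N/\eps))}$ achieves both simultaneously; the partition refinement is routine.
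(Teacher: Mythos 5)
Your overall strategy coincides with the paper's: an equal-area partition of $\sph^q$ into $M\approx N/\log(N/\eps)$ cells of small geodesic diameter, a coupon-collector estimate guaranteeing that every cell is hit, and an application of \cref{thm:deterministic-mz}. The one genuine gap is in the diameter constant. You set $M=\floor{N/(2\log(N/\eps))}$ and then need the equal-area partition to have cell diameter at most $\tfrac12\alpha_qM^{-1/q}$, asserting that the factor $8$ in $\alpha_q$ contains a built-in factor $2$ of slack. It does not: the cited construction (Leopardi, with the constant from Bilyk--Lacey as used in the paper) gives diameter at most $\alpha_qM^{-1/q}$ with $\alpha_q=8\lr{\omega_qq/\omega_{q-1}}^{1/q}$ already. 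With your bound $M\ge N/(4\log(N/\eps))$ this only yields $\mnorm{\prt}\le 2^{1/q}\alpha_q\lr{N/(2\log(N/\eps))}^{-1/q}$, so the hypothesis gives $5C_q(n+q^2)\mnorm{\prt}<2^{1/q}\eta$ rather than $<\eta$; as written your argument proves the theorem with an extra factor $2^{1/q}\le\sqrt2$ in the hypothesis. The paper instead takes $M=\floor{N/\log(N/\eps)}$, so that $\floor{z}\ge z/2$ alone produces $M\ge N/(2\log(N/\eps))$ and the diameter bound matches the hypothesis exactly; the price is that its coupon-collector step then only certifies failure probability $<\eps$, whereas your sharper computation $M\ee^{-N/M}\le N\cdot N^{-2}=1/N$ is precisely what delivers the advertised $1-\frac1N$. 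The two requirements pull against each other through the choice of $M$, and your version resolves the probability side at the cost of the constant.

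Your weight construction is a genuinely different (and correct) route. The paper applies \cref{thm:deterministic-mz} to an $M$-element compatible subset with weights $\sigma_q(\Z_j)=1/M$, notes that the resulting inequality holds for every choice of representative in each patch --- in particular for the per-patch minimizer and maximizer of $\abs{P}^p$ --- and deduces it for the per-patch average, arriving at $w_j=\frac{1}{Mm_j}$. You instead refine each Leopardi cell by the Voronoi cells of the sample points it contains, so that all $N$ points form a compatible pair with the refined partition, and take $w_j=\mu_q(Y_j)/\omega_q$. This avoids the min/max averaging argument entirely and applies the deterministic theorem only once, to a single pair; it buys a cleaner logical structure at the cost of less explicit weights (they depend on the Voronoi geometry rather than only on the occupation numbers $m_j$). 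Both constructions satisfy the purely existential claim of the theorem.
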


The original Marcinkiewicz--Zygmund inequality \eqref{eq:In0} has been generalized in many directions as to univariate and multivariate algebraic polynomials, to non-equidistant, scattered, or random samplings point sets, and to general manifolds.
These generalizations have many applications in various fields in applied mathematics such as interpolation and approximation, quadrature and optimal design, sampling theory, and phase retrieval.
The number of papers dealing with approximation problems on the sphere related to Marcinkiewicz--Zygmund inequalities is too large to present an exhaustive list here, exemplary we mention the papers \cite{BartelKaPoUl2022,BartelScUl2022,BrownDa2005,BuhmannDaNi2021,DaiPrShTeTi2021b,DaiTe2022,MaggioniMh2008,FilbirMh2010,FilbirMh2011,FilbirTh2004,FilbirTh2008,FreemanGh2023,KashinKoLiTe2022,Kosov2021,KriegSo2021,Lubinsky2014,Mhaskar2020,MhaskarNaWa2001,Temlyakov2017,Temlyakov2018}.
An elaborate discussion on the various relationships and a discussion of related work is given by Gröchenig \cite{Groechenig2020} and by Kashin et al. \cite{KashinKoLiTe2022}.

The reason for revisiting the problem of  the Marcinkiewicz--Zygmund inequalities on the unit sphere $\sph^q$ in this paper is at least twofold.
First, classical proofs of the Marcinkiewicz--Zygmund inequalities for cases $p=1$ and $p=\infty$ are based on the Bernstein inequality.
To get the intermediate cases $1<p<\infty$, commonly a Riesz--Thorin interpolation argument has been employed.
However, there is a pitfall in this argument.
The space $\Pi_n^q$ does not contain the simple functions and therefore the use of the Riesz--Thorin interpolation theorem is not justified.
The authors of \cite{FilbirMh2011} found a workaround to this problem in a rather abstract way.
In the paper at hand, we present a more direct solution to this problem by constructing an operator related to the Marcinkiewicz--Zygmund inequalities which is defined on the entire space $L_p$ and for which the Riesz--Thorin argument is justified, see \cref{thm:deterministic-mz}.
Second, we utilize the deterministic Marcinkiewicz--Zygmund inequality to generalize the probabilistic Marcinkiewicz--Zygmund inequality in \cref{thm:tropp-mz} to general $1\le p \le \infty$ in \cref{thm:leopardi-mz}. These are to some extent easier to set up because unlike in the deterministic version, no partition of the sphere is required.

We have organized the paper as follows.
We start by collecting some basic material regarding the analysis on the $q$-dimensional unit sphere in \cref{sec:prelim}.
\cref{sec:random_first} is devoted to a first look to Marcinkiewicz--Zygmund inequalities for sets of random sampling points in the special case of $L_2$.
The entire \cref{sect:scattered} is concerned with the proof of the Marcinkiewicz--Zygmund inequalities for deterministic sets of scattered sampling points for $L_p$, $1\leq p\leq\infty$.
In \cref{chap:final} we consider the case of random point sets again and we show how to derive $L_p$-versions of the desired inequalities for those sampling sets and $1\leq p\leq \infty$.


\section{Preliminaries}\label{sec:prelim}
We start with some notation and basic results on harmonic analysis on the sphere, which can be found, e.g., in \cite{AtkinsonHan2012}.
Let $q\geq 2$ be an arbitrary but fixed integer.
The $q$-dimensional unit sphere $\sph^q$ embedded in $\RR^{q+1}$ is the set  
\begin{equation*}
\sph^q=\setcond{x\in\RR^{q+1}}{\abs{x}_2=1},
\end{equation*}
where $\abs{x}_2$ denotes the Euclidean norm of $x\in\RR^{q+1}$. For the inner product of two vectors $x,y\in\RR^{q+1}$ we write 
$x\cdot y$.
The geodesic distance on $\sph^q$ is given by $d(x,y)=\arccos(x\cdot y)$.
It defines a metric on $\sph^q$. 
The surface measure on $\sph^q$ will be denoted by $\mu_q$ and we assume that
\begin{equation*}
\mu_q(\sph^q)=\frac{2\piup^{\frac{q+1}{2}}}{\Gamma(\frac{q+1}{2})}\eqdef\omega_q.
\end{equation*}
The spaces $L_p(\sph^q)\defeq L_p(\sph^q,\mu_q)$ are defined as usual.
The inner product on the Hilbert space $L_2(\sph^q)$ is given by 
\begin{equation*}
\skpr{f}{g}=\int_{\sph^q} f(x)\overline{g(x)}\dd \mu_q(x). 
\end{equation*}
Recall that using polar coordinates the $k$th component of the vector $x\in\sph^q$ satisfies 
\begin{equation*}
x_k=\begin{cases}\prod_{j=1}^q\sin(\theta_j)&\text{if }k=1,\\\cos(\theta_{k-1})\prod_{j=k}^q\sin(\theta_j)&\text{if }2\leq k\leq q,\\\cos(\theta_q)&\text{if }k=q+1,\end{cases}
\end{equation*}
where $\theta_1\in[-\piup,\piup]$ and $\theta_2,\dots,\theta_q\in [0,\piup]$.
In polar coordinates the surface measure reads as  
\begin{equation*}
\dd\mu_q=\prod_{k=1}^q\sin(\theta_k)^{k-1}\ \dd\theta_k=\sin(\theta_q)^{q-1}\ \dd\theta_q\, \dd\mu_{q-1},
\end{equation*}
or equivalently 
\begin{equation*}
\dd\mu_q=w_q(t) \dd t \dd\mu_{q-1},
\end{equation*}
with Jacobi weight function $w_q(t)=(1-t^2)^{\frac{q}{2}-1}$ and $t=\cos(\theta_q)$.\\ 

According to the weight $w_q$ the spaces $L_{w_q,p}([-1,1])\defeq L_p([-1,1],w_q(t)\, \dd t)$ are defined in the usual manner.
Using the above decomposition of $\dd\mu_q$ it can be easily seen that for any $\phi\in L_{w_q,1}([-1,1])$ and any $y\in\sph^q$ 
\begin{equation*}
\int_{\mathbb{S^q}}\phi(x\cdot y) \dd\mu_q(x)=\omega_{q-1}\int_{-1}^1\phi(t) w_q(t) \dd t.
\end{equation*}

Let $n\geq 0$ be a fixed integer.
The restriction of a harmonic homogeneous polynomial of degree $n$ to $\sph^q$ is called a \emph{spherical harmonic} of degree $n$.
Spherical harmonics of degree at most $n$ for a vector space $\Pi^q_n$.
The vector space of spherical harmonics of degree equal to $n$ shall be denoted by $\mathcal{H}_n^q$.
The spaces $\mathcal{H}_n^q$ are mutually orthogonal with respect to the inner product on $L_2(\sph^q)$ and, moreover, we have the following decomposition $\Pi_n^q=\bigoplus_{\ell=0}^n\mathcal{H}_\ell^q$.
Clearly, the spaces $\mathcal{H}_{\ell}^q$ are finite-dimensional and the dimension of $\Pi_n^q$ is given by the sum of the dimensions of the spaces $\mathcal{H}_\ell^q$, $\ell=0,\dots,n$.
More precisely,
\begin{equation*}
\dim(\mathcal{H}_{\ell}^q)=\frac{(2\ell+q-1)(\ell+q-2)}{\ell!\ (q-1)!}\eqdef h_q(\ell),\qquad \dim(\Pi_n^q)=\sum_{\ell=0}^n h_q(\ell)\eqdef d_q(n).
\end{equation*}
Let $\setcond{Y_{n,k}}{k=1,\dots, h_q(n)}$ be an orthonormal basis for $\mathcal{H}_n^q$.
The following relation of the basis elements $Y_{n,k}$ to the ultraspherical polynomials, known as the addition formula, is of fundamental importance to our analysis  
\begin{equation}
\label{eq:additionFormular}
\sum_{k=1}^{h_q(n)} Y_{n,k}(x)\, \overline{Y_{n,k}(y)}=\frac{h_q(n)}{\omega_q}\ R_n^{(\frac{q}{2}-1,\frac{q}{2}-1)}(x\cdot y),
\end{equation}
where $R_n^{(\frac{q}{2}-1,\frac{q}{2}-1)}$ is the ultraspherical polynomial corresponding to the weight $w_q$ and normalized such that 
$R_n^{(\frac{q}{2}-1,\frac{q}{2}-1)}(1)=1$.
The orthogonality relation for these polynomials reads as 
\begin{equation*}
\int_{-1}^1 R_n^{(\frac{q}{2}-1,\frac{q}{2}-1)}(t)\, R_m^{(\frac{q}{2}-1,\frac{q}{2}-1)}(t)\ w_q(t)\, \dd t=\frac{\omega_q}{\omega_{q-1}\, h_q(n)}\ \delta_{n,m}.
\end{equation*}
In order to simplify the notation we will write $R_n$ instead of $R_n^{(\frac{q}{2}-1,\frac{q}{2}-1)}$.\\ 

The space $L_2(\sph^q)$ can be decomposed in terms of the spaces $\mathcal{H}_n^q$ as 
\begin{equation*}
L_2(\sph^q)=c\ell\bigoplus_{n\in\mathbb{N}_0}\mathcal{H}_n^q=c\ell\ \mathrm{span}\setcond{Y_{n,k}}{n\in\mathbb{N}_0,\ k=1,\dots, h_q(n)}.
\end{equation*}
Consequently, the orthogonal projection of $f\in L_2(\sph^q)$ onto $\mathcal{H}_n^q$ reads as 
\begin{equation*}
\mathcal{P}_nf(x)=\sum_{k=1}^{h_q(n)}\skpr{f}{Y_{n,k}} Y_{n,k}(x)=\frac{h_q(n)}{\omega_q}\int_{-1}^1 f(y)\, R_n(x\cdot y)\ \dd\mu_q(y),
\end{equation*}
where the second identity is an implication of the addition formula \eqref{eq:additionFormular}.
The orthogonal projection onto the space $\Pi_n^q$ is therefore 
given as 
\begin{equation*}
\mathcal{S}_nf(x)=\sum_{k=0}^n\mathcal{P}_kf(x)=\frac{1}{\omega_q}\int_{\sph^q} f(y)\, K_n(x\cdot y,1)\ \dd\mu_q(y),
\end{equation*}
where 
\begin{equation}
\label{eq:Darboux}
K_n(t,t^\prime)=\sum_{k=0}^n \mnorm{R_k}^{-2}_2 R_k(t) R_k(t^\prime)
\end{equation}
is the Christoffel--Darboux kernel for the ultraspherical polynomials and $\mnorm{R_k}_2=(\int_{-1}^1\abs{R_k(t)}^2\dd t)^{\frac{1}{2}}$.
In order to simplify our notation we will write $K_n(t)$ for
$K_n(t,1)$.

In the one-dimensional case, i.e., on $\sph^1 = \TT = \RR/\ZZ$, the $(2n+1)$-dimensional polynomial spaces $\Pi_n^1 = T(n)$ consists of the trigonometric polynomials $f:\TT \to \CC$, $f(x)=\sum_{k=-n}^n c_k\exp(\ii k x)$, where $c_{-n},\ldots,c_n\in\CC$.
The well-known Bernstein inequality for trigonometric polynomials reads as follows, see \cite[Theorem~III.3.16]{Zygmund2002}.
\begin{Lem}\label{thm:bernstein}
For $p\in [1,\infty]$ and $P\in T(n)$, we have
\begin{equation*}
\mnorm{P^\prime}_{\TT,p}\leq n\mnorm{P}_{\TT,p},
\end{equation*}
where the $L_p$ norm of a function defined on the torus is given as
\begin{equation}
\mnorm{f}_{\TT,p}\defeq\begin{cases}\lr{(2\piup)^{-1}\int_{-\piup}^\piup \abs{f(x)}^p\dd x}^{\frac{1}{p}}&\text{if }1\leq p<\infty,\\\sup_{x\in [-\piup,\piup]}\abs{f(x)}&\text{if }p=\infty.\end{cases}\label{eq:torus-norm}
\end{equation}
\end{Lem}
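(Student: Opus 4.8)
The plan is to recognise first that this is the classical Bernstein inequality for trigonometric polynomials, so for the paper itself it suffices to invoke \cite[Theorem~III.3.16]{Zygmund2002}. For a self-contained argument I would represent the derivative operator on $T(n)$ as a convolution with a discrete measure of total mass exactly $n$, and then exploit the translation invariance of $\mnorm{\cdot}_{\TT,p}$ together with Minkowski's inequality; this avoids any Riesz--Thorin step and handles all $p\in[1,\infty]$ at once. Concretely, I would use M.~Riesz's interpolation formula: for every $P\in T(n)$ and every $x\in\TT$,
\begin{equation*}
P^\prime(x)=\sum_{\nu=1}^{2n}\gamma_\nu\, P\lr{x+t_\nu},\qquad t_\nu=\frac{(2\nu-1)\piup}{2n},\quad \gamma_\nu=\frac{(-1)^{\nu-1}}{4n\sin^2(t_\nu/2)}.
\end{equation*}

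Establishing this identity is, in my view, the main obstacle. Since both sides are linear in $P$ and $T(n)=\operatorname{span}\setcond{\ee^{\ii kx}}{\abs{k}\le n}$, it is enough to verify it on $P(x)=\ee^{\ii kx}$ for $\abs{k}\le n$, i.e.\ to prove the finite trigonometric identity $\sum_{\nu=1}^{2n}\gamma_\nu\,\ee^{\ii k t_\nu}=\ii k$ for such $k$; this is the classical derivation (via the Lagrange-type interpolation at the $2n$ nodes $t_\nu$ combined with differentiation), which I would not reproduce. The second ingredient is the evaluation $\sum_{\nu=1}^{2n}\abs{\gamma_\nu}=\tfrac{1}{4n}\sum_{\nu=1}^{2n}\csc^2(t_\nu/2)=n$, which rests on the classical cosecant-square summation $\sum_{\nu=1}^{2n}\csc^2\!\lr{\tfrac{(2\nu-1)\piup}{4n}}=4n^2$; a quick consistency check on $P(x)=\sin(nx)$ at $x=0$ already reproduces exactly this sum.

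Granting these two facts, the inequality follows in one line and uniformly in $p\in[1,\infty]$: by Minkowski's inequality and the translation invariance of $\mnorm{\cdot}_{\TT,p}$,
\begin{equation*}
\mnorm{P^\prime}_{\TT,p}=\mnorm{\sum_{\nu=1}^{2n}\gamma_\nu\, P(\cdot+t_\nu)}_{\TT,p}\le\sum_{\nu=1}^{2n}\abs{\gamma_\nu}\,\mnorm{P(\cdot+t_\nu)}_{\TT,p}=\lr{\sum_{\nu=1}^{2n}\abs{\gamma_\nu}}\mnorm{P}_{\TT,p}=n\mnorm{P}_{\TT,p},
\end{equation*}
the case $p=\infty$ being contained in the same estimate. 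The constant $n$ is optimal, as $P(x)=\cos(nx)$ shows, since then $\mnorm{P^\prime}_{\TT,p}=n\mnorm{\sin(n\cdot)}_{\TT,p}=n\mnorm{P}_{\TT,p}$. In the write-up I would keep this short and cite \cite{Zygmund2002} for the details of M.~Riesz's formula.
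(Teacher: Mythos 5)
Your proposal is correct and matches the paper's treatment: the paper states this lemma without proof, simply citing \cite[Theorem~III.3.16]{Zygmund2002}. The self-contained argument you sketch via M.~Riesz's interpolation formula, the evaluation $\sum_{\nu=1}^{2n}\abs{\gamma_\nu}=n$, and Minkowski's inequality with translation invariance is exactly the classical proof behind that citation, valid uniformly for all $p\in[1,\infty]$.
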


Analogously to \cref{eq:torus-norm}, we may define
\begin{equation*}
\mnorm{f}_{\mu,p}\defeq\begin{cases}\lr{\int_{\sph^q}\abs{f(x)}^p\dd\mu(x)}^{\frac{1}{p}}&\text{if }1\leq p <\infty,\\\mu\text{-}\esssup\setcond{\abs{f(x)}}{x\in \sph^q}&\text{if } p=\infty.\end{cases}
\end{equation*}
and
\begin{equation*}
\mnorm{x}_{\RR^D,p}\defeq\begin{cases}\lr{\sum_{j=1}^D \abs{x_j}^p}^{\frac{1}{p}}&\text{if }1\leq p <\infty,\\\sup\setn{\abs{x_1},\ldots,\abs{x_D}}&\text{if }p=\infty.\end{cases}
\end{equation*}
for any measure $\mu$ on $\sph^q$, $f:\sph^q\to\CC$, and $x=(x_1,\ldots,x_D)\in\RR^D$.

For our analysis it will be necessary to consider partitions of $\sph^q$ and related sets of points.
A family $\prt=\setn{\Z_1,\dots, \Z_N}$ of measurable subsets $\Z_k\subseteq\sph^q$ is called a partition of $\sph^q$ if their interiors are pairwise disjoint, i.e., $\inte(\Z_k)\cap\inte(\Z_{k^\prime})=\emptyset$ for all $k,k^\prime\in\setn{1,\dots,N}$, and $\sph^q=\bigcup_{k=1}^N \Z_k$.
An element $\Z\in\prt$ is called a \emph{patch}.
A finite subset $\smpl$ of $\sph^q$ is called compatible with the partition if there is precisely one element of $\smpl$ in the interior of every patch of $\prt$, viz.\ $\smpl\cap \Z_k=\setn{\xi}$ for every $k\in\setn{1,\ldots,N}$.
We will call the pair $(\smpl,\prt)$ compatible if the set $\smpl$ is compatible with the partition $\prt$ and we will write $\Z_\xi$ to indicate the
patch from $\prt$ which contains the element $\xi\in\smpl$.
There are two parameters related to $\smpl$ resp.\ $\prt$ which will be relevant for our analysis.
These are the \emph{mesh norm} of $\smpl$ defined as 
\begin{equation*}
\delta_\smpl\defeq\max_{x\in\sph^q}\,\min_{y\in\smpl}\, d(x,y),
\end{equation*}
and the \emph{partition norm} related to $\prt$ given by 
\begin{equation*}
\mnorm{\prt}\defeq\max_{\Z\in\prt}\,\max_{x,y\in \Z}\, d(x,y).
\end{equation*}
\begin{figure}[ht]
\begin{center}
\includegraphics[scale=0.3]{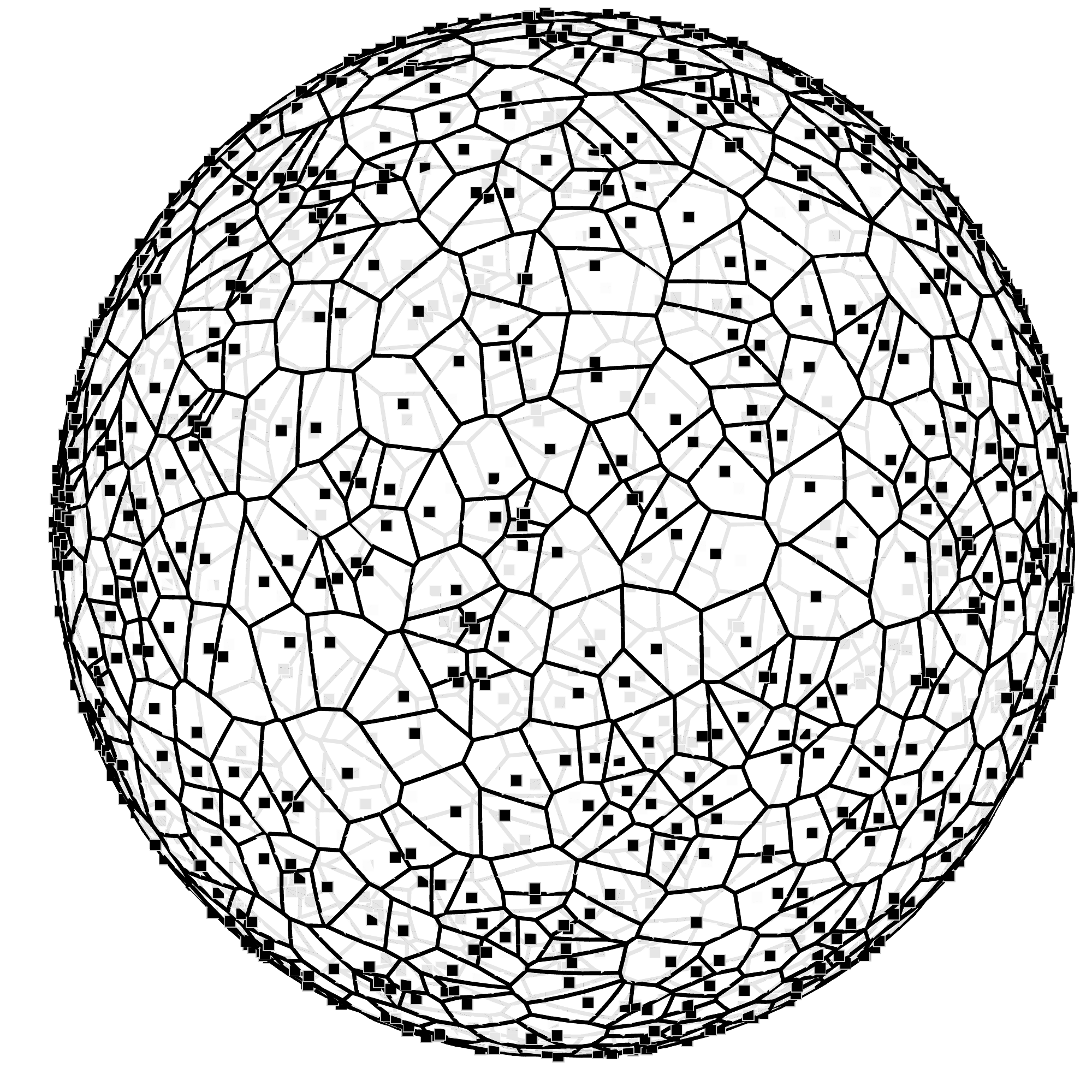}
\end{center}
\caption{Example of a compatible pair $(\smpl,\prt)$.}
\label{fig:partition}
\end{figure}

In view of the Marcinkiewicz--Zygmund inequality, we discretize a measure $\mu$ on $\sph^q$ using the data of a compatible pair $(\smpl,\prt)$ by $\mu(\smpl,\prt)\defeq\sum_{\xi\in\smpl}\mu(\Z_\xi)\delta_{\xi}$, where $\delta_\xi(A)\defeq\mathbf{1}_A(\xi)$ denotes the Dirac measure at $\xi\in\smpl$.

Before we get to $L_p$-Marcinkiewicz--Zygmund inequalities for general $1\leq p\leq \infty$ later, we have a look at the special case $p=2$ through the lens of random matrix theory in the next section.
These proof techniques are tailored to the $p=2$ case, and yield a first version of the Marcinkiewicz--Zygmund inequality for randomly chosen sampling points.


\section{A first look at random points}
\label{sec:random_first}
In this section, we consider the following randomized setting.
Let $\smpl=\setn{\xi_1,\ldots,\xi_N}$ be a set of points on $\sph^q$ 
drawn i.i.d.\ according to the normalized surface area measure $\sigma_q=\frac{1}{\omega_q}\mu_q$ on $\sph^q$.
The aim is to provided a relationship between the number $N$ of samples, the dimension $q$, the polynomial degree $n$, and the parameter $\eta\in \mathopen]0,1\mathclose[$ such that
\begin{equation}
(1-\eta)\mnorm{P}_{\sigma_q,2}^2\leq \frac{1}{N}\sum_{j=1}^N \abs{P(\xi_j)}^2\leq (1+\eta)\mnorm{P}_{\sigma_q,2}^2,\label{eq:mz-squared-normalized}
\end{equation}
holds with high probability for every $P\in\Pi_n^q$.

To keep the notation simple we will write $d$ instead of $d_q(n)$ for the dimension of $\Pi_n^q$.
Let $(e_k)_{k=1}^d$ be an orthonormal basis
of $\Pi_n^q$ with respect to the inner product 
\begin{equation*}
\skpr{f}{g}\defeq\int_{\sph^q}f(x)\overline{g(x)}\dd\sigma_q(x).
\end{equation*}
Parseval's identity yields
\begin{equation*}
\mnorm{P}_{\sigma_q,2}=\mnorm{x}_{\RR^{d},2}
\end{equation*}
where $P\in\Pi_n^q$ and $x=(\skpr{f}{e_k})_{k=1}^d$.
Now consider 
\begin{equation}
L=\begin{pmatrix}
e_1(\xi_1)&e_2(\xi_1)&\cdots&e_M(\xi_1)\\
\vdots&\vdots&\ddots&\vdots\\
e_1(\xi_N)&e_2(\xi_N)&\cdots&e_M(\xi_N)\\
\end{pmatrix},\label{eq:matrix}
\end{equation}
and note that 
\begin{equation*}
(Lx)_j=\sum_{k=1}^d\skpr{f}{e_k}e_k(\xi_j)=f(\xi_j)
\end{equation*}
for $j=1,\ldots,N$.
Thus the inequality \eqref{eq:mz-squared-normalized} can be rewritten as 
\begin{equation}\label{eq:singular-values}
(1-\eta)\mnorm{x}_{\RR^d,2}^2\leq \mnorm{\frac{1}{\sqrt{N}}Lx}_{\RR^N,2}^2\leq (1+\eta)\mnorm{x}_{\RR^d,2}^2.
\end{equation}
Obviously, the best possible constants $1\pm \eta$ in \eqref{eq:singular-values} are given by the minimal resp.\ maximal eigenvalue of
$\frac{1}{N}L^\ast L$.
In \cite[Theorem~2.1]{MoellerUl2021}, Moeller and Ullrich proved the following concentration inequality for the smallest and largest eigenvalue of such random Gram matrices.
The result is based on Tropp \cite{Tropp2012}.
\begin{Satz}\label{thm:moeller-ullrich}
Let $s,N,M\in\NN$, $t\in \mathopen]0,1\mathclose[$, $\Omega\subseteq\RR^s$ a set, $\varrho$ a probability measure on $\Omega$ and $(e_k)_{k=1}^D$ be an orthonormal system in $L_2(\Omega,\varrho)$.
Let $\xi_1,\ldots,\xi_N\in \Omega$ be drawn i.i.d.\ according to $\varrho$, $L=(e_k(\xi_j))_{j,k=1}^{N,D}$, and $\PP=\varrho^{\otimes N}$ the product measure.
Then the following concentration inequalities for the extremal eigenvalues of $\frac{1}{N}L^\ast L$ hold 
\begin{align*}
\mathbb{P}\lr{\lambda_{\min}\lr{\frac{1}{N}L^\ast L}<1-t}&<(D+1)\exp\lr{-\frac{N\log((1-t)^{1-t}\ee^t)}{\sup_{x\in \Omega}\sum_{k=1}^D\abs{e_
k(x)}^2}},\\
\mathbb{P}\lr{\lambda_{\max}\lr{\frac{1}{N}L^\ast L}>1+t}&<(D+1)\exp\lr{-\frac{N\log((1+t)^{1+t}\ee^{-t})}{\sup_{x\in \Omega}\sum_{k=1}^D\abs{e_k(x)}^2}}.
\end{align*}
\end{Satz}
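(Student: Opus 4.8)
The plan is to realize $\frac1N L^\ast L$ as a normalized sum of independent rank-one positive semidefinite matrices and then to invoke Tropp's matrix Chernoff bound \cite{Tropp2012}. For each $j\in\setn{1,\ldots,N}$ set $v_j\defeq(\overline{e_1(\xi_j)},\ldots,\overline{e_D(\xi_j)})^\top\in\CC^D$ and $X_j\defeq v_jv_j^\ast$. Each $X_j$ is Hermitian, positive semidefinite, and of rank one; the $X_j$ are independent and identically distributed because the $\xi_j$ are; and a direct entrywise check gives $\sum_{j=1}^N X_j=L^\ast L$, so that $\frac1N L^\ast L=\frac1N\sum_{j=1}^N X_j$.

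First I would compute the expectation. The $(k,\ell)$ entry of $\mathbb{E}[X_j]$ equals $\int_\Omega \overline{e_k(x)}\,e_\ell(x)\dd\varrho(x)=\delta_{k\ell}$ by orthonormality of $(e_k)_{k=1}^D$ in $L_2(\Omega,\varrho)$, whence $\mathbb{E}[X_j]=I_D$ and $\sum_{j=1}^N\mathbb{E}[X_j]=N\,I_D$. In the language of the matrix Chernoff bound this means $\mu_{\min}=\mu_{\max}=N$, which is precisely why both extremal eigenvalues of $\frac1N L^\ast L$ concentrate at $1$. Next I would establish the required almost-sure operator-norm bound: since $X_j$ has rank one, $\lambda_{\max}(X_j)=\mnorm{v_j}_{\RR^D,2}^2=\sum_{k=1}^D\abs{e_k(\xi_j)}^2\le R$, where $R\defeq\sup_{x\in\Omega}\sum_{k=1}^D\abs{e_k(x)}^2$. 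If $R=\infty$, the claimed bounds are vacuous (the exponent vanishes and the prefactor $D+1$ already exceeds any probability), so I may assume $R<\infty$.

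With these ingredients, Tropp's matrix Chernoff inequality applied to the sum $\sum_{j=1}^N X_j=L^\ast L$ of $D\times D$ matrices, using $\mu_{\min}=\mu_{\max}=N$ and rescaling the thresholds $(1\mp t)N$ by $N$, yields for $t\in\mathopen]0,1\mathclose[$
\begin{align*}
\PP\lr{\lambda_{\min}\lr{\tfrac1N L^\ast L}\le 1-t}&\le D\lr{\frac{\ee^{-t}}{(1-t)^{1-t}}}^{N/R},\\
\PP\lr{\lambda_{\max}\lr{\tfrac1N L^\ast L}\ge 1+t}&\le D\lr{\frac{\ee^{t}}{(1+t)^{1+t}}}^{N/R}.
\end{align*}
It then remains to rewrite the bracketed bases in logarithmic form: since $\log\lr{\ee^{-t}/(1-t)^{1-t}}=-\log\lr{(1-t)^{1-t}\ee^{t}}$ and $\log\lr{\ee^{t}/(1+t)^{1+t}}=-\log\lr{(1+t)^{1+t}\ee^{-t}}$, raising to the power $N/R$ turns each right-hand side into $D\exp\lr{-N\log(\cdots)/R}$, which matches the exponents in the claim. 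Finally, passing from the closed events $\setn{\le 1-t}$, $\setn{\ge 1+t}$ to the open events $\setn{<1-t}$, $\setn{>1+t}$ only shrinks the probabilities, and replacing the prefactor $D$ by $D+1$ weakens the bound, so the stated strict inequalities with prefactor $D+1$ follow at once.

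The main obstacle is conceptual rather than computational: one has to spot the rank-one decomposition and verify that Tropp's hypotheses---independence, positive semidefiniteness, and a uniform almost-sure bound on the top eigenvalue---are met in the (possibly complex) Hermitian setting, the last of which is exactly what forces the Christoffel-type quantity $\sup_{x\in\Omega}\sum_{k=1}^D\abs{e_k(x)}^2$ into the denominator. Once the decomposition and the identity $\mathbb{E}[X_j]=I_D$ are in place, the remainder is the bookkeeping that matches Tropp's exponent to the logarithmic form stated here, together with the harmless strict-inequality and $D+1$ adjustments.
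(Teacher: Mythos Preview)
The paper does not prove this theorem; it quotes it verbatim from \cite[Theorem~2.1]{MoellerUl2021} and merely remarks that the result ``is based on Tropp \cite{Tropp2012}.'' Your argument---the rank-one decomposition $L^\ast L=\sum_j v_jv_j^\ast$, the identity $\mathbb{E}[X_j]=I_D$ from orthonormality, the almost-sure bound $\lambda_{\max}(X_j)\le R$, and then Tropp's matrix Chernoff inequality---is exactly that approach and is correct, including the final cosmetic adjustments (open events, prefactor $D+1$).
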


To apply this result to our case let $s=q+1$, $\Omega=\sph^q$, $\varrho=\sigma_q$, and let $(e_k)_{k=1}^d$ be an orthonormal basis of the Hilbert space $(\Pi_n^q,\mnorm{\cdot}_{\sigma_q,2})$.

To compute the expression $\sup_{x\in \sph^q}\sum_{k=1}^d\abs{e_k(x)}^2$, note that orthonormal bases of $(\Pi_n^q,\mnorm{\cdot}_{\sigma_q,2})$
are obtained from orthonormal bases of $(\Pi_n^q,\mnorm{\cdot}_{\mu_q,2})$ by multiplying each element by the constant scalar
$\omega_q^{\frac{1}{2}}$.
Using the addition formula \eqref{eq:additionFormular} an easy computation shows that 

\begin{equation}
\sum_{k=1}^d\abs{e_k(x)}^2=d = \sum_{\ell=0}^n \frac{(2\ell+q-1)(\ell+q-2)!}{\ell!(q-1)!}\label{eq:christoffel}
\end{equation}
for every $x\in\sph^q$.

\begin{Satz}\label{thm:tropp-mz}
Let $\eta,\eps\in \mathopen]0,1\mathclose[$.
Suppose $\xi_1,\ldots,\xi_N\in \sph^q$ are drawn i.i.d.\ according to $\sigma_q$.
If 
\begin{equation*}
N>\log\lr{\frac{2d_q(n)}{\eps}}\frac{3d_q(n)}{\eta^2},
\end{equation*}
then with probability exceeding $1-\eps$ with respect to the product measure $\PP=\sigma_q^{\otimes N}$, we have
\begin{equation*}
(1-\eta)\mnorm{P}_{\sigma_q,2}^2\leq \frac{1}{N}\sum_{j=1}^N \abs{P(\xi_j)}^2\leq (1+\eta)\mnorm{P}_{\sigma_q,2}^2
\end{equation*}
for all $P\in\Pi_n^q$.
\end{Satz}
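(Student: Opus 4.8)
The plan is to apply \cref{thm:moeller-ullrich} directly, with the choices $s=q+1$, $\Omega=\sph^q$, $\varrho=\sigma_q$, $D=d\defeq d_q(n)$, and $(e_k)_{k=1}^d$ an orthonormal basis of $(\Pi_n^q,\mnorm{\cdot}_{\sigma_q,2})$. By the discussion preceding the statement, \eqref{eq:mz-squared-normalized} is equivalent to the two-sided eigenvalue bound $1-\eta\le\lambda_{\min}(\frac1N L^\ast L)$ and $\lambda_{\max}(\frac1N L^\ast L)\le 1+\eta$, and by \eqref{eq:christoffel} the denominator $\sup_{x\in\sph^q}\sum_{k=1}^d\abs{e_k(x)}^2$ appearing in \cref{thm:moeller-ullrich} equals $d$ exactly. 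So the task reduces to showing that the hypothesis $N>\log(2d/\eps)\cdot 3d/\eta^2$ forces each of the two failure probabilities in \cref{thm:moeller-ullrich} (with $t=\eta$) to be at most $\eps/2$, whence a union bound gives total failure probability $<\eps$.

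First I would bound the two logarithmic terms $\log((1-\eta)^{1-\eta}\ee^{\eta})$ and $\log((1+\eta)^{1+\eta}\ee^{-\eta})$ from below. Both equal $g(\pm\eta)$ where $g(u)=(1+u)\log(1+u)-u$; a standard estimate gives $g(u)\ge u^2/2 - u^3/6\ge u^2/3$ for $u\in\mathopen]{-1},1\mathclose[$ (one may check $g(u)\ge u^2/3$ directly on $[-1,1]$ by elementary calculus, the binding case being $u\to 1$ where $g(1)=2\log 2-1\approx0.386>1/3$). Hence each exponent $-Ng(\pm\eta)/d$ is bounded above by $-N\eta^2/(3d)$. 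With the hypothesis on $N$ this yields $\exp(-Ng(\pm\eta)/d)<\exp(-\log(2d/\eps))=\eps/(2d)$, so each probability in \cref{thm:moeller-ullrich} is strictly less than $(d+1)\cdot\eps/(2d)$. Since $d\ge 1$ we have $(d+1)/(2d)\le 1$ only for $d\ge1$... more precisely $(d+1)/(2d)\le 1$ always, but we actually need $\le 1/2$; note $(d+1)/(2d)= \tfrac12+\tfrac1{2d}$, which is not $\le\tfrac12$.

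So a small sharpening is needed: I would instead absorb the factor $d+1\le 2d$ and rerun the estimate as $\exp(-Ng/d)<\eps/(2d)$ giving $(d+1)\exp(-Ng/d)< 2d\cdot\eps/(2d)=\eps$ — but that is $\eps$ per side, not $\eps/2$. The clean fix is to note the hypothesis is a strict inequality, so there is slack: replacing $2d$ by $4d$ in the estimate costs only $\log 2$ in $N$, which one can verify is absorbed, OR — cleaner still — observe that \cref{thm:moeller-ullrich} gives \emph{strict} inequality and the function $g$ estimate has genuine slack ($g(\eta)>\eta^2/3$ strictly), so for the given $N$ one gets each side $<\eps/2$ after all. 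The honest route is: from $N>\log(2d/\eps)\,3d/\eta^2$ deduce $(d+1)\exp(-N\eta^2/(3d)) < (d+1)\cdot\tfrac{\eps}{2d} = \eps\cdot\tfrac{d+1}{2d}$, and then use that \cref{thm:moeller-ullrich}'s bound is strict together with monotonicity to shave the extra factor; alternatively just state the result with the harmless constant $d+1\le 2d$ folded in. I expect this bookkeeping around the factor $d+1$ versus $2d$ and splitting $\eps$ into $\eps/2+\eps/2$ to be the only delicate point; everything else is a direct substitution into \cref{thm:moeller-ullrich} via \eqref{eq:christoffel}. I would finish by writing the union bound $\PP(\text{failure})\le\PP(\lambda_{\min}<1-\eta)+\PP(\lambda_{\max}>1+\eta)<\eps$ and translating back through \eqref{eq:singular-values}.
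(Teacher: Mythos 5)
Your plan is essentially the paper's own proof: instantiate \cref{thm:moeller-ullrich} with $t=\eta$ and an orthonormal basis of $(\Pi_n^q,\mnorm{\cdot}_{\sigma_q,2})$, use \eqref{eq:christoffel} to replace the supremum in the denominator by $d=d_q(n)$, bound both logarithmic terms below by $\eta^2/3$, and finish with a union bound via \eqref{eq:singular-values}. The one point you agonize over --- the prefactor $d+1$ versus $d$ --- is precisely where the paper is also cavalier: in its display the prefactor $(D+1)$ of \cref{thm:moeller-ullrich} silently becomes $d$, which is what Tropp's original matrix Chernoff bound actually delivers, and then $d\exp(-N\eta^2/(3d))<\eps/2$ per side follows exactly from the hypothesis on $N$. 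If one insists on keeping $d+1$, your ``slack'' fix does work: since $(1+\eta)\log(1+\eta)-\eta\geq 3\eta^2/8$ and $(1-\eta)\log(1-\eta)+\eta\geq\eta^2/2$, each tail is at most $(d+1)(\eps/(2d))^{9/8}$, which is below $\eps/2$ whenever $d\geq 4$, i.e.\ for all $n\geq 1$ and $q\geq 2$ (the case $n=0$ being trivial because $P$ is then constant). One small correction: your first fallback of ``replacing $2d$ by $4d$'' would strengthen the hypothesis on $N$, which is fixed in the statement, so that route is not available; also note that what is needed is a lower bound on the \emph{minimum} of the two logarithmic terms, not the maximum as the paper's proof misstates.
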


\begin{proof}
We will again use $d$ for $d_q(n)$.
Let $(e_k)_{k=1}^d$ be an orthonormal basis for $(\Pi_n^q,\mnorm{\cdot}_{\sigma_q})$ and $L\defeq (e_k(\xi_j))_{j,k=1}^{N,d}$.
By \cref{thm:moeller-ullrich} and \eqref{eq:christoffel}, we have
\begin{equation}
\mathbb{P}\lr{\lambda_{\min}\lr{\frac{1}{N}L^\ast L}<1-\eta}<d\exp\lr{-\frac{N\log((1-\eta)^{1-\eta}\ee^t)}{d}}\label{eq:lambda-min}
\end{equation}
and
\begin{equation}
\mathbb{P}\lr{\lambda_{\max}\lr{\frac{1}{N}L^\ast L}>1+\eta}<d\exp\lr{-\frac{N\log((1+\eta)^{1+\eta}\ee^{-\eta})}{d}}.\label{eq:lambda-max}
\end{equation}
For $\eta\in \mathopen]0,1\mathclose[$, we have
\begin{equation*}
\max\setn{\log((1-\eta)^{1-\eta}\ee^\eta), \log((1+\eta)^{1+\eta}\ee^{-\eta})}\geq\frac{\eta^2}{3},
\end{equation*}
so the right-hand sides of \eqref{eq:lambda-min} and \eqref{eq:lambda-max} are each less or equal to $\exp\lr{-\frac{N\eta^2}{3d}}$.
Note that $N>\log\lr{\frac{2d}{\eps}}\frac{3d}{\eta^2}$ is equivalent to $d\exp\lr{-\frac{N\eta^2}{3d}}<\frac{\eps}{2}$.
Thus
\begin{align*}
&\norel\mathbb{P}\lr{\lambda_{\min}\lr{\frac{1}{N}L^\ast L}<1-\eta}+\mathbb{P}\lr{\lambda_{\max}\lr{\frac{1}{N}L^\ast L}>1+\eta}\\
&<2d\exp\lr{-\frac{N\eta^2}{3d}}\\
&<\eps.
\end{align*}
This concludes the proof.
\end{proof}
Note that the statement \cref{thm:tropp-mz} holds verbatim for any direct sum $\bigoplus_{\ell\in J}\mathcal{H}_\ell^q$ for some index set $J\subseteq \NN$ in place of $\Pi_n^q$, with $d_{n,q}$ replaced by $\dim(\bigoplus_{\ell\in J}\mathcal{H}_\ell^q)$.
Like in \eqref{eq:christoffel}, this is due to fact that the addition formula for orthonormal bases holds true for the summands $H_l^q$, see again \cite[equation~(2.8)]{FilbirTh2008}.

In order to illustrate \cref{thm:tropp-mz} we fix $q=2$, $\eta=0.9$ and $\varepsilon = 0.01$, randomly draw $N=\log\lr{\frac{2d_q(n)}{\eps}}\frac{3d_q(n)}{\eta^2}$ spherical points $\xi_1,\ldots,\xi_N \in \mathbb S^2$ and compute
the minimum and maximum eigenvalues $\lambda_{\text{min}}$ and $\lambda_{\text{max}}$ of the matrix $\frac{1}{N} L^\ast L$.
This we repeated 1000 times for the different polynomials degrees $n$ and depicted in \cref{fig:MZProp2} the average minimum and maximum eigenvalues as well as the 1 percent and 99 quantiles.
According to our experiment those are safely within the range $[1-\eta,1+\eta]$ as stated by \cref{thm:tropp-mz}.

\pgfplotstableread{
    N    lMin1     lMinM     lMin99    lMax1     lMaxM     lMax99
    8    0.5556    0.6566    0.7240    1.3021    1.3923    1.5388
   10    0.5559    0.6502    0.7170    1.3213    1.4043    1.5530
   12    0.5542    0.6459    0.7045    1.3242    1.4112    1.5530
   16    0.5386    0.6372    0.6938    1.3373    1.4217    1.5650
   20    0.5599    0.6356    0.6877    1.3515    1.4288    1.5498
   25    0.5381    0.6304    0.6783    1.3574    1.4348    1.5495
   32    0.5481    0.6269    0.6736    1.3719    1.4428    1.5567
   40    0.5520    0.6228    0.6697    1.3820    1.4472    1.5598
   50    0.5498    0.6196    0.6623    1.3935    1.4554    1.5640
}{\eigMZ}
    
\begin{figure}
    \centering
    \begin{tikzpicture}
    \begin{semilogxaxis}[%
        xtick=data, log ticks with fixed point,
        xlabel={polynomial degree $n$},
        ylabel={$\lambda_{\text{min}}$,$\lambda_{\text{max}}$}]

        \addplot+[forget plot,only marks, mark size=3pt, draw=black,mark options={scale=1, draw=black,fill=black}] 
        plot[error bars/.cd, y dir=both, y explicit,  error mark options={  rotate=90,black,mark size=3pt,line width=1pt}]
        table[x=N,y=lMinM,y error minus expr=\thisrow{lMinM}-\thisrow{lMin1},
        y error plus expr=\thisrow{lMin99}-\thisrow{lMinM}] {\eigMZ};

        \addplot+[forget plot,only marks, mark size=3pt, draw=black,mark options={scale=1, draw=black,fill=black}] 
        plot[error bars/.cd, y dir=both, y explicit,  error mark options={  rotate=90,black,mark size=3pt,line width=1pt}]
        table[x=N,y=lMaxM,y error minus expr=\thisrow{lMaxM}-\thisrow{lMax1},
        y error plus expr=\thisrow{lMax99}-\thisrow{lMaxM}] {\eigMZ};
    \end{semilogxaxis}
    \end{tikzpicture}
    \caption{Concentration of the minimum and maximum eigenvalues of the matrix $\frac{1}{N} L^\ast L$ for random sample sets $\xi_1,\ldots,\xi_N \in \mathbb S^2$ and different polynomial degrees $n$. The number of sampling points $N$ is chosen according to the lower bound in \cref{thm:tropp-mz}, where we have used the constants $\eta=0.9$ and $\eps=0.01$. Displayed are the mean minimum and maximum eigenvalues as well as the 1 and 99 percent quantiles.}
    \label{fig:MZProp2}
\end{figure}
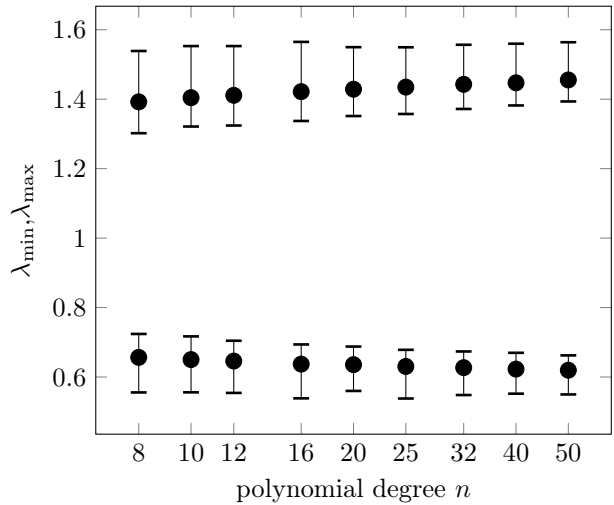

As a first step towards general $p$, we obtain the following statement when $p$ is even.
\begin{Kor}
Let $\eta,\eps\in \mathopen]0,1\mathclose[$ and let $p\in\NN$ be an even number.
Assume $\xi_1,\ldots,\xi_N\in \sph^q$ are drawn i.i.d.\ according to $\sigma_q$.
If 
\begin{equation*}
N>\log\lr{\frac{2d_q(np/2)}{\eps}}\frac{3d_q(np/2)}{\eta^2},
\end{equation*}
then with probability exceeding $1-\eps$ with respect to the product measure $\PP=\sigma_q^{\otimes N}$, we have
\begin{equation*}
(1-\eta)\mnorm{P}_{\sigma_q,p}^p\leq \frac{1}{N}\sum_{j=1}^N \abs{P(\xi_j)}^p\leq (1+\eta)\mnorm{P}_{\sigma_q,p}^p,
\end{equation*}
for all $P\in\Pi_n^q$.
\end{Kor}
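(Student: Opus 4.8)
The plan is to reduce everything to the $p=2$ result already proved in \cref{thm:tropp-mz}. Fix an even integer $p$ and a polynomial $P\in\Pi_n^q$, and set $Q\defeq P^{p/2}$. Since $P$ is the restriction to $\sph^q$ of an algebraic polynomial of degree at most $n$ on $\RR^{q+1}$, the power $Q=P^{p/2}$ is the restriction of an algebraic polynomial of degree at most $np/2$, so $Q\in\Pi_{np/2}^q$. The point of assuming $p$ even is precisely that $p/2\in\NN$, so this step involves no fractional powers or absolute values; it only uses that $\Pi_m^q$ is closed under pointwise products with the degrees adding.

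Next I would record the two elementary identities linking the $p$-quantities for $P$ with the $2$-quantities for $Q$. Using $\abs{z^{p/2}}=\abs{z}^{p/2}$ for the continuous norm,
\begin{equation*}
\mnorm{Q}_{\sigma_q,2}^2=\int_{\sph^q}\abs{P(x)}^p\dd\sigma_q(x)=\mnorm{P}_{\sigma_q,p}^p,
\end{equation*}
and for the samples $\abs{Q(\xi_j)}^2=\abs{P(\xi_j)}^p$ for every $j$, hence $\frac1N\sum_{j=1}^N\abs{Q(\xi_j)}^2=\frac1N\sum_{j=1}^N\abs{P(\xi_j)}^p$. Now I would invoke \cref{thm:tropp-mz} with $\Pi_{np/2}^q$ in place of $\Pi_n^q$; its dimension is $d_q(np/2)$, matching the hypothesis $N>\log(\tfrac{2d_q(np/2)}{\eps})\tfrac{3d_q(np/2)}{\eta^2}$ imposed in the corollary. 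The theorem gives, with probability exceeding $1-\eps$ with respect to $\PP$, the chain $(1-\eta)\mnorm{Q}_{\sigma_q,2}^2\le\frac1N\sum_{j=1}^N\abs{Q(\xi_j)}^2\le(1+\eta)\mnorm{Q}_{\sigma_q,2}^2$ simultaneously for all $Q\in\Pi_{np/2}^q$, in particular for every $Q=P^{p/2}$ with $P\in\Pi_n^q$. Substituting the two identities converts this into the claimed inequality for $P$, and since the favourable event is one and the same for all such $P$, the quantifier \enquote{for all $P\in\Pi_n^q$} is preserved.

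The only point that deserves care, and the one place the argument could fail, is the assertion $P^{p/2}\in\Pi_{np/2}^q$: this relies on identifying $\Pi_m^q$ with the space of restrictions to $\sph^q$ of all (not necessarily homogeneous) polynomials of degree at most $m$, so that this family is closed under products. Everything else is bookkeeping, and I expect no genuine obstacle beyond this observation.
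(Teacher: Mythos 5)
Your proposal is correct and follows essentially the same route as the paper: substitute $Q=P^{p/2}\in\Pi_{np/2}^q$, apply \cref{thm:tropp-mz} to the space $\Pi_{np/2}^q$ of dimension $d_q(np/2)$, and translate back via $\mnorm{Q}_{\sigma_q,2}^2=\mnorm{P}_{\sigma_q,p}^p$ and $\abs{Q(\xi_j)}^2=\abs{P(\xi_j)}^p$. The one point you flag for care, namely that $\Pi_m^q$ consists of restrictions to $\sph^q$ of polynomials of degree at most $m$ and is therefore closed under products with degrees adding, is exactly the fact the paper also relies on (stated there without further comment), so there is no gap.
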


\begin{proof}
For abbreviation we put $\tilde{d} = d_q(np/2)$.
Let $(e_k)_{k=1}^{\tilde{d}}$ be an orthonormal basis of the Hilbert space $(\Pi_{\frac{np}{2}}^q,\mnorm{\cdot}_{\sigma_q,2})$.
Since with $P \in \Pi^q_n$ it holds $P^\frac{p}{2} \in \Pi^q_{\frac{n p}{2}}$ we have by  \cref{thm:tropp-mz} for $N>\log\lr{\frac{2\tilde{d}}{\eps}}\frac{3\tilde{d}}{\eta^2}$
\begin{equation*}
(1-\eta)\mnorm{P^{\frac{p}{2}}}_{\sigma_q,2}^2\leq \frac{1}{N}\sum_{j=1}^N \abs{P(\xi_j)^{\frac{p}{2}}}^2
\leq (1+\eta)\mnorm{P^{\frac{p}{2}}}_{\sigma_q,2}^2
\end{equation*}
with probability $\geq 1-\eps$ with respect to 
the product measure $\PP=\sigma_q^{\otimes N}$.

This is equivalent to the assertion.
\end{proof}

In order to obtain Marcinkiewicz--Zygmund inequalities for random sampling points and general $p\in [1,\infty]$ we first reconsider the case were 
the sampling points are deterministic scattered points on $\sph^q$.


\section{Marcinkiewicz--Zygmund inequalities for scattered data}\label{sect:scattered}
In this section, we give a proof for a deterministic Marcinkiewicz--Zygmund inequality on $\sph^q$ which holds for all $p$ simultaneously.
Reasoning from the Riesz--Thorin interpolation theorem has been attempted in the literature several times, however (to our best knowledge) always fraught with problems.
The authors of \cite{FilbirMh2011} are aware of this issue and prove deterministic Marcinkiewicz--Zygmund inequality in a manifold setting by different means.
The aim of this section is to provide a self-contained and rather elementary proof for the sphere by proper use of Riesz--Thorin interpolation, which, in addition, simplifies some of the technical calculations in \cite{FilbirTh2008,MhaskarNaWa2001}.
The main theorem in this section reads as follows.
\begin{Satz}\label{thm:deterministic-mz}
Let $\eta\in \mathopen]0,1\mathclose[$, and let $(\smpl,\prt)$ be a compatible pair consisting of a finite set $\smpl\subseteq \sph^q$ and a partition $\prt$ of $\sph^q$.
Assume that
\begin{equation*}
5C_q(n+q^2)\mnorm{\prt}\leq \eta
\end{equation*}
with $C_q\defeq 2(3+3^{q/2}\,\piup)$.\label{eq:constant}
Then, for all $p\in[1,\infty]$ and every $P\in\Pi_n^q$, we have
\begin{equation*}
(1-\eta)\mnorm{P}_{\mu_q, p}\leq\mnorm{P}_{\mu_q(\smpl,\prt),p}\leq (1+\eta)\mnorm{P}_{\mu_q,p}.
\end{equation*}
\end{Satz}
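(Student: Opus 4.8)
The plan is to reduce everything to the two endpoint cases $p=1$ and $p=\infty$ and then interpolate, but — in light of the pitfall with $\Pi_n^q$ not containing simple functions that the introduction emphasizes — the interpolation must be carried out on an operator defined on all of $L_p(\sph^q)$, not on the polynomial subspace. So the first step is to introduce, for a fixed compatible pair $(\smpl,\prt)$, the two operators $T_1\colon L_p(\sph^q)\to \ell_p(\smpl)$ given by $T_1 f=(f(\xi))_{\xi\in\smpl}$ composed with a suitable smoothing (for instance, first project $f$ onto $\Pi_n^q$ via $\mathcal S_n$, so that $T_1=\mathcal S_n$ followed by evaluation and the weights $\mu_q(Z_\xi)^{1/p}$), and $T_2\colon \ell_p(\smpl)\to L_p(\sph^q)$ that spreads a sequence back out; then $T_2T_1$ and $\mathrm{id}-T_2T_1$ are genuinely defined on all of $L_p$, the Riesz--Thorin theorem applies to them legitimately, and on $\Pi_n^q$ one recovers the Marcinkiewicz--Zygmund statement. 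I would set this up carefully so that the operator norm of the relevant "error operator'' at $p=1$ and at $p=\infty$ is bounded by $\eta$ under the hypothesis $5C_q(n+q^2)\mnorm{\prt}\le \eta$.

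Next I would establish the two endpoint estimates. The key analytic tool is a quadrature/quadrature-error estimate: for $P\in\Pi_n^q$ and any patch $Z_\xi$ with representative $\xi$, one compares $\int_{Z_\xi}\abs{P(x)}\,\dd\mu_q(x)$ with $\mu_q(Z_\xi)\abs{P(\xi)}$, and the difference is controlled by $\mu_q(Z_\xi)\cdot\mnorm{\prt}\cdot\sup_{Z_\xi}\abs{\nabla_{\!\sph}P}$ via a mean-value argument along geodesics of length $\le\mnorm{\prt}$. Summing over $\xi$ and using a Bernstein-type inequality on the sphere — $\mnorm{\,\abs{\nabla_{\!\sph}P}\,}_{\mu_q,p}\lesssim n\mnorm{P}_{\mu_q,p}$, or more precisely the sharpened version with the $(n+q^2)$ factor that presumably gets derived from \cref{thm:bernstein} by slicing the sphere into circles and integrating — gives the $p=1$ bound $\bigl|\mnorm{P}_{\mu_q,1}-\mnorm{P}_{\mu_q(\smpl,\prt),1}\bigr|\le \eta\mnorm{P}_{\mu_q,1}$. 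The $p=\infty$ bound is similar but pointwise: $\abs{P(x)-P(\xi)}\le \mnorm{\prt}\sup\abs{\nabla_{\!\sph}P}\le C n\mnorm{\prt}\mnorm{P}_{\mu_q,\infty}$ for $x$ in the patch of $\xi$, and conversely; here the Bernstein inequality is used in its $L_\infty$ form. The constant $C_q=2(3+3^{q/2}\piup)$ should fall out of tracking the geometry of the slicing (the $3^{q/2}$ reflecting volume ratios $\omega_q/\omega_{q-1}$ and the $\piup$ from arc-length conversions), and the additive $q^2$ absorbs the gap between the "naive'' degree $n$ and the degree after differentiation plus the Christoffel-function normalization.

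Then comes the interpolation step: having shown the error operator $E=\mathrm{id}-T_2T_1$ (suitably normalized) satisfies $\mnorm{E}_{L_1\to L_1}\le\eta$ and $\mnorm{E}_{L_\infty\to L_\infty}\le\eta$, Riesz--Thorin yields $\mnorm{E}_{L_p\to L_p}\le\eta$ for all $p\in[1,\infty]$, and restricting to $P\in\Pi_n^q$ (where $T_1$ is, up to the weights, just evaluation because $\mathcal S_n P=P$) converts this into $(1-\eta)\mnorm{P}_{\mu_q,p}\le \mnorm{P}_{\mu_q(\smpl,\prt),p}\le(1+\eta)\mnorm{P}_{\mu_q,p}$ by the triangle inequality in $L_p$, $\mnorm{\mnorm{P}_{\mu_q,p}-\mnorm{P}_{\mu_q(\smpl,\prt),p}}\le\mnorm{EP}_{L_p}$, once one checks that $\mnorm{P}_{\mu_q(\smpl,\prt),p}=\mnorm{T_1P}_{\ell_p}$ and $\mnorm{T_2T_1P}_{L_p}$ relate correctly. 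The factor $5$ in the hypothesis presumably comes from accumulating a handful of such triangle-inequality and comparison losses (e.g. two-sided patch comparison, plus the $\mathcal S_n$-boundedness constant, plus slack in the Bernstein constant).

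I expect the main obstacle to be getting the endpoint operator norms exactly right — in particular, making the Riesz--Thorin target be an honest operator on $L_p$ while still having its restriction to $\Pi_n^q$ reproduce the discretized norm, and simultaneously tracking the geometric constants (the $\mnorm{\prt}$ vs. mesh-norm conversion, the dimension-dependent volume factors, and the passage from the spherical Bernstein inequality, which itself must be bootstrapped from the one-dimensional \cref{thm:bernstein} by the polar-coordinate slicing, incurring the $(n+q^2)$ and the $3^{q/2}\piup$) so that the clean bound $5C_q(n+q^2)\mnorm{\prt}\le\eta$ emerges with the stated $C_q$. The analytic heart — comparing integrals over patches with weighted point values via a gradient bound — is routine; the bookkeeping of constants and the correct choice of the $L_p$-operator for interpolation is where the care is needed.
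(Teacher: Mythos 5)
Your overall strategy --- endpoint estimates at $p=1,\infty$ plus Riesz--Thorin applied to an operator defined on all of $L_p$, then restriction to $\Pi_n^q$ --- matches the paper's, and your gradient/Bernstein heuristics for the endpoints are in the right spirit. But the specific interpolation target you choose is fatally flawed. Your error operator is $E=\mathrm{id}-T_2T_1$ with $T_2T_1$ of finite rank (it factors through $\ell_p(\smpl)$, and in your version also through $\Pi_n^q$ via $\mathcal S_n$). On the infinite-dimensional space $L_p(\sph^q)$ one always has $\mnorm{\mathrm{id}-A}_{p\to p}\geq 1$ for any finite-rank (indeed any compact) $A$; concretely, any $f$ with $\mathcal S_nf=0$ satisfies $Ef=f$. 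So the endpoint bounds $\mnorm{E}_{1\to1}\leq\eta$, $\mnorm{E}_{\infty\to\infty}\leq\eta$ that you plan to feed into Riesz--Thorin cannot hold for any $\eta<1$, and your patch-by-patch gradient estimates do not supply them either: those estimates control $\abs{P(x)-P(\xi)}$ only for $P\in\Pi_n^q$, i.e.\ they bound $E$ on the polynomial subspace --- which is precisely the illegitimate use of interpolation you set out to avoid.

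The missing idea is that one must subtract two \emph{smoothed} objects rather than subtracting a smoothed object from the identity. The paper's operator \eqref{eq:linear-operator} is
\begin{equation*}
T_{\smpl,\prt,n}(f)(x)=\int_{\sph^q}\sum_{\xi\in\smpl}\mathbf{1}_{\Z_\xi}(x)\bigl(v_n(x\cdot y)-v_n(\xi\cdot y)\bigr)f(y)\,\dd\mu_q(y),
\end{equation*}
built from the generalized de la Vall\'ee Poussin kernel $v_n$. This is small on all of $L_1$ and $L_\infty$ (\cref{thm:bounded-operator-ell-1,thm:bounded-operator-ell-infty}) because the kernel difference $v_n(x\cdot y)-v_n(\xi\cdot y)$ is controlled by $\mnorm{\prt}$ times the derivative bound of \cref{thm:dvlp} --- smallness comes from the smoothness of the kernel, not from any projection property --- while on $\Pi_n^q$ the reproducing property of $v_n$ turns $T_{\smpl,\prt,n}P$ into $P(x)-P(\xi_x)$, which yields $\abs{\mnorm{P}_{\mu_q,p}-\mnorm{P}_{\mu_q(\smpl,\prt),p}}\leq\mnorm{T_{\smpl,\prt,n}(P)}_{\mu_q,p}$. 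A secondary but real problem with your setup is the choice of $\mathcal S_n$ as the smoothing: the Christoffel--Darboux projector is not uniformly bounded on $L_1$ or $L_\infty$, which is exactly why the paper replaces it by $v_n$, whose $L_1$ norm is bounded by $3^{q/2}/\omega_{q-1}$ uniformly in $n$ (see \eqref{eq:ell-1-dlvp}); the constant $C_q=2(3+3^{q/2}\piup)$ comes from the derivative estimate in \cref{thm:dvlp}, not from a spherical gradient Bernstein inequality.
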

The proof of \cref{thm:deterministic-mz} is essentially based on a generalized de la Vall\'ee Poussin kernel $v_n \colon [-1,1] \to \RR$, $n \in \NN$ for the system of ultraspherical
polynomials, which was defined in \cite{FilbirTh2008} as
\begin{equation*}
v_n(t)=\frac{1}{w_{q-1}}\frac{K_{\floor{\frac{n}{2}}}(t)K_{\floor{\frac{3n}{2}}}(t)}{K_{\floor{\frac{n}{2}}}(1)},
\end{equation*}
where $K_n$ is the Christoffel--Darboux kernel defined in \eqref{eq:Darboux}.
The generalized de la Vallée Poussin kernel $v_n$ is a polynomial of degree $2n$ that reproduces polynomials $P\in\Pi_n^q$, up to degree $n$ viz.\
\begin{equation*}
    P(x)=\int_{\sph^q} P(y)\, v_n(x\cdot y)\ \dd\mu_q(y).
\end{equation*} 
as it does the Christoffel--Darboux kernel $K_n$.
Additionally, the kernels $v_n$, $n \in \NN$ have bounded $L_1$ norm
 \begin{equation}
\int_{-1}^1\abs{v_n(t)}(1-t^2)^{\frac{q}{2}-1}\dd t\leq\frac{3^{\frac{q}{2}}}{\omega_{q-1}}\label{eq:ell-1-dlvp}
\end{equation}
and satsify
\begin{equation}
\label{eq:ell-infty-dlvp}
\sup_{t\in[-1,1]}\abs{v_n(t)}\leq\frac{1}{\omega_{q-1}}\frac{2^{-q+1}(\floor{\frac{3n}{2}}+q)^q}{\Gamma(\frac{q}{2})\Gamma(\frac{q}{2}+1)}
\leq\frac{1}{\omega_{q-1}}\frac{2\max\setn{n,2q}^q}{\Gamma(\frac{q}{2})\Gamma(\frac{q}{2}+1)}.
\end{equation}
For the proof of these statements we refer to \cite[Section 3.3]{FilbirTh2008}.
We prepare the proof of \cref{thm:deterministic-mz} by first showing  an integral bound of the derivative of the generalized de la Vall\'ee Poussin kernel.
\begin{Lem}\label{thm:dvlp}
For $n,q\in\NN$ with $q\geq 2$, the following estimate holds
\begin{equation*}
\int_0^\piup\abs{v_n^\prime(\cos(\tau))\sin(\tau)^q}\dd\tau\leq C_q\frac{(n+q^2)}{\omega_{q-1}}.
\end{equation*}
where $C_q=3^\frac{q}{2}\piup+2q+2$.
\end{Lem}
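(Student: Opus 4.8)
The plan is to transfer the estimate to the torus and to exploit the $L_1$-Bernstein inequality of \cref{thm:bernstein}. Substituting $t=\cos\tau$ and using $\tfrac{\dd}{\dd\tau}v_n(\cos\tau)=-v_n'(\cos\tau)\sin\tau$, one rewrites the quantity to be bounded as
\begin{equation*}
\int_0^\piup\abs{v_n'(\cos\tau)\sin^q\tau}\dd\tau=\int_{-1}^1\abs{v_n'(t)}(1-t^2)^{\frac{q-1}{2}}\dd t=\int_0^\piup\abs{\psi'(\tau)}\sin^{q-1}\tau\dd\tau ,
\end{equation*}
where $\psi(\tau)\defeq v_n(\cos\tau)$ is an even trigonometric polynomial of degree $\leq 2n$ (as $v_n$ has degree $\leq 2n$). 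The weight $\sin^{q-1}\tau$ will then be absorbed into the polynomial: since $q\geq 2$, the function $g(\tau)\defeq\psi(\tau)\sin^{q-1}\tau=v_n(\cos\tau)\sin^{q-1}\tau$ is a trigonometric polynomial of degree $\leq 2n+q-1$ vanishing at $0$ and $\piup$, and $\abs{g}$, $\abs{g'}$ are even, so the torus $L_1$-norms of $g$ and $g'$ reduce to $\tfrac1\piup\int_0^\piup$.

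Differentiating $g$ and solving for $\psi'\sin^{q-1}$ yields the pointwise identity $\psi'(\tau)\sin^{q-1}\tau=g'(\tau)-(q-1)v_n(\cos\tau)\sin^{q-2}\tau\cos\tau$, whence
\begin{equation*}
\int_0^\piup\abs{v_n'(\cos\tau)\sin^q\tau}\dd\tau\leq\int_0^\piup\abs{g'(\tau)}\dd\tau+(q-1)\int_0^\piup\abs{v_n(\cos\tau)}\sin^{q-2}\tau\abs{\cos\tau}\dd\tau .
\end{equation*}
The first term is handled by \cref{thm:bernstein} applied to $g$, together with the symmetry just noted and the substitution $t=\cos\tau$:
\begin{equation*}
\int_0^\piup\abs{g'(\tau)}\dd\tau\leq(2n+q-1)\int_0^\piup\abs{g(\tau)}\dd\tau=(2n+q-1)\int_{-1}^1\abs{v_n(t)}w_q(t)\dd t\leq(2n+q-1)\frac{3^{q/2}}{\omega_{q-1}},
\end{equation*}
by the $L_1$-bound \eqref{eq:ell-1-dlvp}; since $2n+q-1\leq 2(n+q^2)$, this accounts for the $3^{q/2}\piup$-part of $C_q$ (the $\piup$ being bought by slack in the coefficient of $n$).

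For the remaining term $R\defeq(q-1)\int_0^\piup\abs{v_n(\cos\tau)}\sin^{q-2}\tau\abs{\cos\tau}\dd\tau$ the plan is to split $[0,\piup]$ at $\tau_0\defeq 1/\max\setn{n,2q}$ (so $\tau_0\leq\tfrac12<\tfrac\piup2$). Near the poles, on $[0,\tau_0]\cup[\piup-\tau_0,\piup]$, use the pointwise bound \eqref{eq:ell-infty-dlvp} for $v_n$ together with the exact primitive $(q-1)\int_0^{\tau_0}\sin^{q-2}\tau\cos\tau\dd\tau=\sin^{q-1}\tau_0\leq\tau_0^{q-1}$; this bounds the near-pole part of $R$ by $2\bigl(\sup_{t\in[-1,1]}\abs{v_n(t)}\bigr)\tau_0^{q-1}$, and the choice of $\tau_0$ is calibrated precisely so that $\tau_0^{q-1}$ cancels the $\max\setn{n,2q}^q$ appearing in \eqref{eq:ell-infty-dlvp}, leaving a contribution of order $\frac{n+q^2}{\omega_{q-1}}$ (it is here that $\max\setn{n,2q}$, rather than $n$, enters, forcing the $+q^2$; the Gamma factors from \eqref{eq:ell-infty-dlvp} are absorbed via $\Gamma(\tfrac q2)\Gamma(\tfrac q2+1)\geq 1$ for $q\geq 2$). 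On the middle interval $[\tau_0,\piup-\tau_0]$, write $\sin^{q-2}\tau\abs{\cos\tau}=\sin^{q-1}\tau\abs{\cot\tau}\leq\sin^{q-1}\tau/\tau_0$ and invoke \eqref{eq:ell-1-dlvp} again, bounding that part by $\tfrac{q-1}{\tau_0}\int_{-1}^1\abs{v_n(t)}w_q(t)\dd t\leq(q-1)\max\setn{n,2q}\frac{3^{q/2}}{\omega_{q-1}}$.

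Summing the three pieces and simplifying with $\max\setn{n,2q}\leq n+q^2$ and $2n+q-1\leq 2(n+q^2)$ produces a bound of the shape $(\text{explicit constant})\cdot\frac{n+q^2}{\omega_{q-1}}$, and it remains to verify that the constant does not exceed $C_q=3^{q/2}\piup+2q+2$. I expect the decisive obstacle to be exactly the middle contribution of $R$: there the factor $q-1$ multiplies a term produced by \eqref{eq:ell-1-dlvp}, so staying within the $3^{q/2}\piup$-budget is tight; making the bookkeeping close will require running the above steps with the sharpest available constants, and quite possibly replacing the crude use of \eqref{eq:ell-1-dlvp} on the middle interval by a finer, more localized control of $v_n$ away from the poles. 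Everything else — the reduction in the first paragraph, the Bernstein step, and the near-pole estimate — is routine.
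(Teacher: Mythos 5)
Your argument is essentially the paper's own proof: the same threshold $1/\max\setn{n,2q}$, the same application of the trigonometric Bernstein inequality to the degree-$(2n+q-1)$ polynomial $(v_n\circ\cos)\sin^{q-1}$ combined with the $L_1$-bound \eqref{eq:ell-1-dlvp}, the same near-pole estimate via the sup-bound \eqref{eq:ell-infty-dlvp}, and the same cotangent estimate on the middle range. The only (cosmetic) difference is the order of operations: you apply the product rule on all of $[0,\piup]$ and then split only the resulting $(q-1)$-term at the poles, whereas the paper splits first and uses the product rule only on the middle interval; this turns the near-pole contribution $4n/\omega_{q-1}$ into $4\max\setn{n,2q}/\omega_{q-1}$, which is immaterial. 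Your closing worry about the middle contribution of $R$ is well founded, but it is not a defect of your method relative to the paper's: both bookkeepings produce a term of size $(q-1)\,3^{q/2}\max\setn{n,2q}/\omega_{q-1}$ (yours without, the paper's with an extra factor $\piup/2$) at exactly that spot, and the paper's final simplification silently drops the factor $3^{q/2}$ from that term when collecting the coefficient of $n$; with the algebra done correctly, the stated $C_q=3^{q/2}\piup+2q+2$ is recovered from this argument only for $q=2$, and for larger $q$ one obtains a constant of order $q\,3^{q/2}$ instead. So the obstacle you anticipate is real, but it lives in the paper's constant, not in your (substantively identical) proof.
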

\begin{proof}
Let $\theta\defeq 1/\max\setn{n,2q}$.
We split the integral over $[0,\piup]$ into three parts
\begin{equation*}
\int_0^\piup\abs{v_n^\prime(\cos(\tau))\sin(\tau)^q}\dd\tau=\Big(\int_0^{\theta}+\int_{\theta}^{\piup-\theta}+\int_{\piup-\theta}^\piup\Big) \abs{v_n^\prime(\cos(\tau))\sin(\tau)^q}\dd\tau.
\end{equation*}
By the trigonometric Bernstein inequality (\cref{thm:bernstein}) and \cref{eq:ell-infty-dlvp}, we obtain for the first integral
\begin{align*}
A_1&\defeq \int_0^{\theta}\abs{v_n^\prime(\cos(\tau))\sin(\tau)^q}\dd\tau
=\int_0^{\theta} \abs{(v_n\circ \cos)^\prime(\tau)\sin(\tau)^{q-1}}\dd \tau\\
&\leq 2n \mnorm{v_n\circ \cos}_{\TT,\infty} \int_0^{\theta}\tau^{q-1}\dd \tau
\le \frac{4n}{\omega_{q-1} q\, \Gamma(\frac{q}{2})\Gamma(\frac{q}{2}+1)}  
\le \frac{2n}{\omega_{q-1}}
\end{align*}
Thanks to symmetry the same upper bounds is valid for the third integral
\begin{align*}
A_3 \defeq \int_{\piup-\theta}^{\piup}
\abs{v_n^\prime(\cos(\tau))\sin(\tau)^q}\dd\tau.
\end{align*}

Using the product rule followed by triangular inequality we split the middle integral into 
\begin{align*}
A_2&\defeq \int_{\theta}^{\piup-\theta}\abs{v_n^\prime(\cos(\tau))\sin(\tau)^q}\dd\tau\\
&\leq \underbrace{\int_{\theta}^{\piup-\theta} \hspace{-0.2cm}\abs{((v_n\circ\cos)\sin^{q-1})^\prime(\tau)} \dd\tau}_{\eqdef I_1}
 +\underbrace{\int_{\theta}^{\piup-\theta}\hspace{-0.2cm}\abs{v_n(\cos(\tau))(q-1)\sin(\tau)^{q-2}\cos(\tau)}\dd\tau}_{\eqdef I_2}
\end{align*}
Applying the trigonometric Bernstein inequality to $(v_n \circ \cos) \sin^{q-1}$ we obtain in conjunction with \cref{eq:ell-1-dlvp}
\begin{align*}
I_1&\leq (2n+q-1)\int_0^\piup\abs{v_n(\cos(\tau))\sin(\tau)^{q-1}}\dd \tau
\leq (2n+q-1)\frac{3^{\frac{q}{2}}}{\omega_{q-1}}\\
\intertext{and }
I_2&=(q-1)\int_{\theta}^{\piup-\theta}\abs{v_n(\cos(\tau))}\sin(\tau)^{q-1}\frac{\abs{\cos(\tau)}}{\sin(\tau)}\dd\tau\\
&\leq (q-1)\frac{\piup}{2\,\theta}\int_{\theta}^{\piup-\theta}\abs{v_n(\cos(\tau))\sin(\tau)^{q-1}}\dd\tau
\leq (q-1)\frac{\piup}{2\,\theta}\frac{3^{\frac{q}{2}}}{\omega_{q-1}},
\end{align*}
where we made use of $\abs{\cos(\tau)}\leq 1$ for all $\tau\in\RR$, $\frac{1}{\sin(\tau)}\leq \frac{\piup}{2\tau}\leq\frac{\piup}{2\,\theta}$ when $\theta\leq \tau\leq\frac{\piup}{2}$, and $\frac{1}{\sin(\tau)}\leq\frac{\piup}{2(\piup-\tau)}\leq\frac{\piup}{2\,\theta}$ when $\frac{\piup}{2}\leq\tau\leq\piup-\theta$.
Finally we arrive at 
\begin{align*}
\int_0^\piup \abs{v_n^\prime(\cos(\tau))\sin(\tau)^q}\dd\tau
&\leq A_1+A_2+I_1+I_2\\
&\leq \frac{4n}{\omega_{q-1}}+ \frac{3^{q/2}}{\omega_{q-1}} (2n + (q-1)(1+\piup \max\lr{\frac{n}{2},q)} \\
&\le \frac{4 + 2 \cdot 3^{\frac{q}{2}} + \frac{q-1}{2}\piup}{\omega_{q-1}} n
+ \frac{3^{q/2} (q-1)(1+\piup q)}{\omega_{q-1}}\\
&\leq\frac{3^\frac{q}{2}\piup+2q+2}{\omega_{q-1}}(n+q^2)
\end{align*}
which concludes the proof.
\end{proof}

A key step in the proof of \cref{thm:deterministic-mz} is to show that for every compatible pair $(\smpl,\prt)$ 
\begin{equation}
T_{\smpl,\prt,n}(f)(x)\defeq\int_{\sph^q}\sum_{\xi\in\smpl}\mathbf{1}_{\Z_\xi}(x)\big(v_n(x\cdot y)-v_n(\xi\cdot y)\big)f(y)\dd\mu_q(y),\label{eq:linear-operator}
\end{equation}
defines a bounded operator $T_{\smpl,\prt,n}:L_p(\sph^q,\mu_q)\to L_p(\sph^q,\mu_q)$ for all $p\in [1,\infty]$.
We concentrate on the extreme cases $p=1$ and $p=\infty$ in the following two lemmas and start with $p=1$.
\begin{Lem}\label{thm:bounded-operator-ell-1}
The mapping $T_{\smpl,\prt,n}$ defines a bounded linear operator from $L_1(\sph^q)$ to $L_1(\sph^q)$ with norm
\begin{equation*}
\mnorm{T_{\smpl,\prt,n}}_{1\to 1}\leq \lr{\frac{2^{q+3}}{q\Gamma(\frac{q}{2})\Gamma(\frac{q}{2}+1)}+4 C_q}(n+q^2)\mnorm{\prt}.
\end{equation*}
provided that $(n+q^2)\mnorm{\prt}\in \mathopen]0,1\mathclose[$.
\end{Lem}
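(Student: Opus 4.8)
The plan is to estimate the $L_1 \to L_1$ operator norm of $T_{\smpl,\prt,n}$ directly from its integral representation \eqref{eq:linear-operator}. First I would note that since the patches $Z_\xi$ partition $\sph^q$ up to boundaries of measure zero, for any $f \in L_1(\sph^q)$ we have
\begin{equation*}
\mnorm{T_{\smpl,\prt,n}(f)}_{\mu_q,1} = \int_{\sph^q}\abs{T_{\smpl,\prt,n}(f)(x)}\dd\mu_q(x) \leq \sum_{\xi\in\smpl}\int_{\Z_\xi}\int_{\sph^q}\abs{v_n(x\cdot y)-v_n(\xi\cdot y)}\abs{f(y)}\dd\mu_q(y)\dd\mu_q(x).
\end{equation*}
By Tonelli's theorem one swaps the order of integration, so it suffices to bound $\sup_{y\in\sph^q}\sum_{\xi\in\smpl}\int_{\Z_\xi}\abs{v_n(x\cdot y)-v_n(\xi\cdot y)}\dd\mu_q(x)$; then $\mnorm{T_{\smpl,\prt,n}}_{1\to 1}$ is at most this supremum.

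The core estimate is pointwise: for $x \in Z_\xi$ and fixed $y$, bound $\abs{v_n(x\cdot y)-v_n(\xi\cdot y)}$ by a mean-value argument. Writing $x\cdot y = \cos(d(x,y))$ and $\xi\cdot y = \cos(d(\xi,y))$, and using that $d(x,\xi)\leq \mnorm{\prt}$ whenever $x,\xi \in Z_\xi$, the fundamental theorem of calculus applied to $\tau\mapsto v_n(\cos\tau)$ along the geodesic-distance parameter gives a bound of the form $\abs{v_n(x\cdot y)-v_n(\xi\cdot y)}\leq \mnorm{\prt}\cdot\sup_{\tau\in I}\abs{(v_n\circ\cos)'(\tau)}$ over the relevant interval $I$ of angles between $d(\xi,y)$ and $d(x,y)$. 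The difficulty is that this crude sup-bound, when integrated against $\dd\mu_q$ over all patches, must reproduce the integral $\int_0^\piup\abs{v_n'(\cos\tau)\sin(\tau)^q}\dd\tau$ controlled in \cref{thm:dvlp} — so one needs to carry the factor $\sin(\tau)^{q-1}$ (coming from $\dd\mu_q = \sin(\theta)^{q-1}\dd\theta\,\dd\mu_{q-1}$ in polar coordinates centered at $y$) along with the derivative, rather than bounding $\sin$ by $1$. Concretely, I expect the argument to combine the patches near the poles $\tau\approx 0$ and $\tau\approx\piup$ (where the volume element is small and one uses the $L_\infty$ bound \eqref{eq:ell-infty-dlvp} on $v_n$ itself to absorb the geodesic displacement, contributing the $\frac{2^{q+3}}{q\Gamma(q/2)\Gamma(q/2+1)}$ term) with the bulk region (where \cref{thm:dvlp} supplies the $4C_q$ term after accounting for a factor of $2$ from the interval $I$ having length up to $2\mnorm{\prt}$ and another factor from overlapping contributions of neighbouring patches).

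The main obstacle, and the step I would be most careful with, is the bookkeeping that turns the patchwise sum $\sum_{\xi}\int_{Z_\xi}$ into the single integral over $[0,\piup]$ in polar coordinates about $y$: the mean-value displacement means the contribution of patch $Z_\xi$ involves $\abs{v_n'}$ evaluated not on $Z_\xi$ itself but on a $\mnorm{\prt}$-neighbourhood of the angular values it occupies, so one must argue that this inflation only costs a bounded multiplicative constant — here the hypothesis $(n+q^2)\mnorm{\prt}<1$ is what guarantees the neighbourhoods do not blow up the estimate. Once the pointwise bound $\abs{v_n(x\cdot y)-v_n(\xi\cdot y)}\leq 2\mnorm{\prt}\sup_{\tau}\abs{v_n'(\cos\tau)\sin\tau}$-type inequality is in place and summed, one invokes \cref{thm:dvlp} to get the $C_q(n+q^2)/\omega_{q-1}$ factor, multiplies by $\omega_{q-1}$ from the $\dd\mu_{q-1}$ integration over $\sph^{q-1}$, adds the polar-cap contributions, and collects constants to reach the stated bound.
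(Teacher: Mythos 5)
Your outline reproduces the paper's architecture: reduce via the triangle inequality, Fubini, and H\"older to bounding $\esssup_{y\in\sph^q}\int_{\sph^q}\sum_{\xi\in\smpl}\mathbf{1}_{\Z_\xi}(x)\abs{v_n(x\cdot y)-v_n(\xi\cdot y)}\dd\mu_q(x)$, pass to polar coordinates about $y$, split $[0,\piup]$ into two caps of width $2\mnorm{\prt}$ (handled with Bernstein and the sup bound \eqref{eq:ell-infty-dlvp}, giving the $\frac{2^{q+3}}{q\Gamma(q/2)\Gamma(q/2+1)}$ term) and a bulk region (handled with \cref{thm:dvlp}, giving the $4C_q$ term). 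One small correction: the hypothesis $(n+q^2)\mnorm{\prt}<1$ is used \emph{only} to absorb the cap contributions of order $\lr{(n+q^2)\mnorm{\prt}}^{q+1}$ into a term linear in $(n+q^2)\mnorm{\prt}$; it plays no role in controlling the window inflation, which is handled exactly.

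The gap is in what you call the core estimate. Bounding $\abs{v_n(x\cdot y)-v_n(\xi\cdot y)}$ by $\mnorm{\prt}\cdot\sup_{\tau\in I}\abs{(v_n\circ\cos)^\prime(\tau)}$ destroys precisely the $L_1$ structure you need to invoke \cref{thm:dvlp}: after integrating in $\tau=d(x,y)$ you are left with the integral of a local maximal function of $\abs{(v_n\circ\cos)^\prime}$ over windows of width $2\mnorm{\prt}$, and this is not controlled by $\int_0^\piup\abs{v_n^\prime(\cos t)\sin(t)^q}\dd t$ (a narrow peak of the derivative of width much smaller than $\mnorm{\prt}$ contributes its height times $\mnorm{\prt}$ to the former but only its mass to the latter; a crude global sup via Bernstein and \eqref{eq:ell-infty-dlvp} overshoots the $L_1$ bound of \cref{thm:dvlp} by a factor of order $n^q$). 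The correct mechanism, which you gesture at with \enquote{overlapping contributions of neighbouring patches} but never actually state, is to take no supremum at all: enlarge the fundamental-theorem-of-calculus interval to the $\xi$-independent window
\begin{equation*}
\abs{v_n(x\cdot y)-v_n(\xi\cdot y)}\leq\int_{d(x,y)-\mnorm{\prt}}^{d(x,y)+\mnorm{\prt}}\abs{(v_n\circ\cos)^\prime(t)}\dd t,
\end{equation*}
use $\sum_{\xi\in\smpl}\mathbf{1}_{\Z_\xi}\equiv 1$ almost everywhere to drop the patch sum entirely, and then in the bulk region combine $\sin(\tau)\leq 2\sin(t)$ for $\abs{t-\tau}\leq\mnorm{\prt}$ with Tonelli in the pair $(t,\tau)$ to convert $\int\sin(\tau)^{q-1}\int_{\tau-\mnorm{\prt}}^{\tau+\mnorm{\prt}}\cdots\dd t\,\dd\tau$ into exactly $4\mnorm{\prt}\int_0^\piup\abs{v_n^\prime(\cos\tau)\sin(\tau)^q}\dd\tau$, at which point \cref{thm:dvlp} applies. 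Without this replacement of the mean-value bound by the window-integral-plus-Fubini argument, your plan does not yield the stated constant.
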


\begin{proof}
Using the triangle inequality, Fubini's theorem, and Hölder's inequality, we obtain
\begin{align*}
&\norel\mnorm{T_{\smpl,\prt,n}(f)}_{\mu_q,1}\\
&=\int_{\sph^q}\abs{\int_{\sph^q}\sum_{\xi\in\smpl}\mathbf{1}_{\Z_\xi}(x)\big(v_n(x\cdot y)-v_n(\xi\cdot y)\big)\,f(y)\dd\mu_q(y)}\dd\mu_q(x)\\
&\leq\int_{\sph^q}\int_{\sph^q}\sum_{\xi\in\smpl}\mathbf{1}_{\Z_\xi}(x)\abs{v_n(x\cdot y)-v_n(\xi\cdot y)}\,\abs{f(y)}\dd\mu_q(y)\dd\mu_q(x)\\
&\leq \mnorm{f}_{\mu_q,1}\esssup_{y\in\sph^q}\int_{\sph^q}\sum_{\xi\in\smpl}\mathbf{1}_{\Z_\xi}(x)\abs{v_n(x\cdot y)-v_n(\xi\cdot y)}\dd\mu_q(x).
\end{align*}
Now fix $y\in\sph^q$.
The fundamental theorem of calculus and the triangle inequality give
\begin{align*}
&\norel\int_{\sph^q}\sum_{\xi\in\smpl}\mathbf{1}_{\Z_\xi}(x)\abs{v_n(x\cdot y)-v_n(\xi\cdot y)}\dd\mu_q(x)\\
&=\int_{\sph^2}\sum_{\xi\in\smpl}\mathbf{1}_{\Z_\xi}(x)\abs{\int_{[d(x,y),d(\xi,y)]}(v_n\circ\cos)^\prime (t)\dd t}\dd\mu_q(x)\\
&\leq\int_{\sph^q}\sum_{\xi\in\smpl}\mathbf{1}_{\Z_\xi}(x)\int_{d(x,y)-\mnorm{\prt}}^{d(x,y)+\mnorm{\prt}}\abs{(v_n\circ\cos)^\prime (t)}
  \dd t\, \dd\mu_q(x).\\
\intertext{Now integration is independent of $\xi$, and since $\sum_{\xi\in\smpl}\mathbf{1}_{\Z_\xi}(x)=1$ for $\mu_q$-almost all $x\in\sph^q$, this factor can be omitted.
Parametrizing $\sph^q$ with north pole $y$ yields}
&=\omega_{q-1}\int_0^\piup\sin(\tau)^{q-1}\int_{\tau-\mnorm{\prt}}^{\tau+\mnorm{\prt}}\abs{(v_n\circ\cos)^\prime(t)}\dd t\dd\tau
\end{align*}
where we only resolved the outer integral in the last step.
Having in mind that $\mnorm{\prt}<\frac{\piup}{2}$, we split the integration over $[0, \piup]$ into pieces:
\begin{equation*}
\int_0^\piup\sin(\tau)^{q-1}\int_{\tau-\mnorm{\prt}}^{\tau+\mnorm{\prt}}\abs{ (v_n\circ\cos)^\prime(t)}\dd t\dd\tau=B_1+B_2+B_3.
\end{equation*}
Upper bounds for the summands
\begin{align*}
B_1&\defeq\int_0^{2\mnorm{\prt}}\sin(\tau)^{q-1}\int_{\tau-\mnorm{\prt}}^{\tau+\mnorm{\prt}}\abs{(v_n\circ\cos)^\prime(t)}\dd t\dd\tau\\
&\leq 2\mnorm{\prt}\mnorm{(v_n\circ\cos)^\prime}_{\TT,\infty}\int_0^{2\mnorm{\prt}}\sin(\tau)^{q-1}\dd\tau\\
&\leq 4n\mnorm{\prt}\mnorm{v_n\circ\cos}_{\TT,\infty}\int_0^{2\mnorm{\prt}}\tau^{q-1}\dd\tau\\
&= 4n\mnorm{\prt}\mnorm{v_n\circ\cos}_{\TT,\infty}q^{-1}2^q\mnorm{\prt}^q\\
&\leq n\max\setn{n,2q}^q\mnorm{\prt}^{q+1}\frac{2^{q+2}}{q\omega_{q-1}\Gamma(\frac{q}{2})\Gamma(\frac{q}{2}+1)}\\
&\leq (n+q^2)^{q+1}\mnorm{\prt}^{q+1}\frac{2^{q+2}}{q\omega_{q-1}\Gamma(\frac{q}{2})\Gamma(\frac{q}{2}+1)}\\
\intertext{and likewise}
B_2&\defeq\int_{\piup-2\mnorm{\prt}}^\piup\sin(\tau)^{q-1}\int_{\tau-\mnorm{\prt}}^{\tau+\mnorm{\prt}}\abs{\frac{\dd}{\dd t} (v_n\circ\cos)^\prime(t)}\dd t\dd\tau\\
&\leq 2\mnorm{\prt}\mnorm{(v_n\circ\cos)^\prime}_{\TT,\infty}\int_{\piup-2\mnorm{\prt}}^\piup\sin(\tau)^{q-1}\dd\tau\\
&\leq (n+q^2)^{q+1}\mnorm{\prt}^{q+1}\frac{2^{q+2}}{q\omega_{q-1}\Gamma(\frac{q}{2})\Gamma(\frac{q}{2}+1)}\\
\end{align*}
are due to \cref{thm:bernstein}, \cref{eq:ell-infty-dlvp} and $\sin(\tau)\leq \tau$ for all $\tau\in\RR$.

Now, if $\tau-\mnorm{\prt}\leq t\leq \tau+\mnorm{\prt}$ and $2\mnorm{\prt}\leq\tau\leq \piup-2\mnorm{\prt}$, we have $\mnorm{\prt}\leq t\leq\piup-\mnorm{\prt}$, and thus
\begin{align*}
\sin(\tau)&=\sin(\tau-t+t)=\sin(t)\cos(\tau-t)+\sin(\tau-t)\cos(t)\\
&\leq \sin(t)+\sin(\mnorm{\prt})\leq 2\sin(t).
\end{align*}
This yields the following upper bound for the third summand
\begin{align*}
B_3&\defeq\int_{2\mnorm{\prt}}^{\piup-2\mnorm{\prt}}\sin(\tau)^{q-1}\int_{\tau-\mnorm{\prt}}^{\tau+\mnorm{\prt}}\abs{(v_n\circ\cos)^\prime(t)}\dd t\dd\tau\\
&\leq 2\int_{2\mnorm{\prt}}^{\piup-2\mnorm{\prt}}\sin(t)^{q-1}\int_{\tau-\mnorm{\prt}}^{\tau+\mnorm{\prt}}\abs{(v_n\circ\cos)^\prime(t)}\dd t\dd\tau\\
&=2\int_{2\mnorm{\prt}}^{\piup-2\mnorm{\prt}}\int_{-\mnorm{\prt}}^{\mnorm{\prt}}\abs{v_n^\prime(\cos(t+\tau))}\sin(t+\tau)^q\dd t\dd\tau\\
&=2\int_{-\mnorm{\prt}}^{\mnorm{\prt}}\int_{2\mnorm{\prt}}^{\piup-2\mnorm{\prt}}\abs{v_n^\prime(\cos(t+\tau))}\sin(t+\tau)^q\dd\tau\dd t.\\
\intertext{Another change of variables and enlarging the integration interval yields}
&=2\int_{-\mnorm{\prt}}^{\mnorm{\prt}}\int_{2\mnorm{\prt}+t}^{\piup-2\mnorm{\prt}+t}\abs{v_n^\prime(\cos(\tau))}\sin(\tau)^q\dd\tau\dd t\\
&=2\int_{-\mnorm{\prt}}^{\mnorm{\prt}}\int_0^\piup\abs{v_n^\prime(\cos(\tau))}\sin(\tau)^q\dd\tau\dd t.\\
&=4\mnorm{\prt}\int_0^\piup\abs{ v_n^\prime(\cos(\tau))}\sin(\tau)^q\dd\tau\\
&\leq 4\mnorm{\prt} C_q\frac{n+q^2}{\omega_{q-1}},
\end{align*}
where we used \cref{thm:dvlp} for the last step.
Using $(n+q^2)\mnorm{\prt}<1$, we obtain
\begin{align*}
&\norel\mnorm{T_{\smpl,\prt,n}(f)}_{\mu_q,1}\\
&\leq \mnorm{f}_{\mu_q,1}\omega_{q-1}\Biggl(2(n+q^2)^{q+1}\mnorm{\prt}^{q+1}\frac{2^{q+2}}{q\omega_{q-1}\Gamma(\frac{q}{2})\Gamma(\frac{q}{2}+1)}\\
&\qquad +4\mnorm{\prt} C_q\frac{n+q^2}{\omega_{q-1}}\Biggr)\\
&= \mnorm{f}_{\mu_q,1}(n+q^2)\mnorm{\prt}\lr{\frac{2^{q+3}}{q\Gamma(\frac{q}{2})\Gamma(\frac{q}{2}+1)}+4 C_q}
\end{align*}
and the proof is finished.
\end{proof}
We now turn to the other boundary case $p=\infty$.
\begin{Lem}\label{thm:bounded-operator-ell-infty}
The mapping  $T_{\smpl,\prt,n}$ is a bounded linear operator from $L_\infty(\sph^q)$ to $L_\infty(\sph^q)$ with norm
\begin{equation*}
\mnorm{T_{\smpl,\prt,n}}_{\infty\to\infty}\leq 4C_q(n+q^2)\mnorm{\prt}.
\end{equation*}
\end{Lem}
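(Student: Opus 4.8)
The plan is to bound the operator norm $\mnorm{T_{\smpl,\prt,n}}_{\infty\to\infty}$ directly, mimicking the structure of the proof of \cref{thm:bounded-operator-ell-1} but with the roles of the two integrations swapped. For $f\in L_\infty(\sph^q)$ and $\mu_q$-almost every $x\in\sph^q$, there is exactly one $\xi\in\smpl$ with $x\in\Z_\xi$, so
\begin{equation*}
\abs{T_{\smpl,\prt,n}(f)(x)}\leq\mnorm{f}_{\mu_q,\infty}\int_{\sph^q}\abs{v_n(x\cdot y)-v_n(\xi\cdot y)}\dd\mu_q(y).
\end{equation*}
Thus it suffices to show that $\int_{\sph^q}\abs{v_n(x\cdot y)-v_n(\xi\cdot y)}\dd\mu_q(y)\leq 4C_q(n+q^2)\mnorm{\prt}$ whenever $d(x,\xi)\leq\mnorm{\prt}$.

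First I would apply the fundamental theorem of calculus along the path in the argument of $v_n\circ\cos$: writing $d(x,y)$ and $d(\xi,y)$ for the geodesic distances, $\abs{v_n(x\cdot y)-v_n(\xi\cdot y)}=\abs{\int_{[d(x,y),d(\xi,y)]}(v_n\circ\cos)'(t)\dd t}$, and since $\abs{d(x,y)-d(\xi,y)}\leq d(x,\xi)\leq\mnorm{\prt}$ by the triangle inequality, this is at most $\int_{d(x,y)-\mnorm{\prt}}^{d(x,y)+\mnorm{\prt}}\abs{(v_n\circ\cos)'(t)}\dd t$. Next I would parametrize $\sph^q$ with north pole $x$, turning $\int_{\sph^q}(\cdots)\dd\mu_q(y)$ into $\omega_{q-1}\int_0^\piup\sin(\tau)^{q-1}\int_{\tau-\mnorm{\prt}}^{\tau+\mnorm{\prt}}\abs{(v_n\circ\cos)'(t)}\dd t\dd\tau$. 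This is \emph{exactly} the quantity that was bounded in the proof of \cref{thm:bounded-operator-ell-1} — there it was split as $B_1+B_2+B_3$ — except that here it appears without the extra pointwise factor coming from indicator functions, and it is multiplied by $\omega_{q-1}$ rather than appearing inside an $L_1$ average.

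Therefore the work is essentially already done: the same splitting into $[0,2\mnorm{\prt}]$, $[2\mnorm{\prt},\piup-2\mnorm{\prt}]$, $[\piup-2\mnorm{\prt},\piup]$ applies, with the endpoint pieces handled by the trigonometric Bernstein inequality (\cref{thm:bernstein}) together with \eqref{eq:ell-infty-dlvp}, and the middle piece via the substitution that reduces it, after enlarging the integration interval, to $4\mnorm{\prt}\int_0^\piup\abs{v_n'(\cos(\tau))}\sin(\tau)^q\dd\tau\leq 4\mnorm{\prt}C_q(n+q^2)/\omega_{q-1}$ by \cref{thm:dvlp}. Collecting terms and absorbing the (lower-order, since $(n+q^2)\mnorm{\prt}<1$) contribution of $B_1+B_2$ into the dominant term gives the bound $4C_q(n+q^2)\mnorm{\prt}$, where the constant is now cleaner than in the $L_1$ case because the $\omega_{q-1}$ factors cancel directly and no $\mnorm{f}$-average is involved. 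The only point requiring a little care is keeping track of constants so that the final bound really is $4C_q$ rather than something slightly larger; since $C_q=3^{q/2}\piup+2q+2$ already dominates the $B_1,B_2$ prefactor $2^{q+3}/(q\Gamma(\frac q2)\Gamma(\frac q2+1))$ when $(n+q^2)\mnorm{\prt}<1$, this absorption is routine, and I do not expect any genuine obstacle beyond that bookkeeping.
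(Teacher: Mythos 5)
Your reduction to bounding $\int_{\sph^q}\abs{v_n(x\cdot y)-v_n(\xi\cdot y)}\dd\mu_q(y)$ for a fixed pair $x\in\Z_\xi$ is exactly how the paper begins, and your observation that the FTC estimate plus polar coordinates at $x$ reproduces the quantity $\omega_{q-1}(B_1+B_2+B_3)$ from the proof of \cref{thm:bounded-operator-ell-1} is correct. The gap is in the final step: the endpoint pieces $B_1$ and $B_2$ contribute a genuinely \emph{additional} positive summand of order $(n+q^2)^{q+1}\mnorm{\prt}^{q+1}$, and a positive additive term cannot be ``absorbed'' into $4C_q(n+q^2)\mnorm{\prt}$ without enlarging the constant. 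Even granting $(n+q^2)\mnorm{\prt}<1$ (a hypothesis present in \cref{thm:bounded-operator-ell-1} but absent from the statement you are proving), your route yields at best the constant $\frac{2^{q+3}}{q\Gamma(\frac q2)\Gamma(\frac q2+1)}+4C_q$, i.e.\ the same bound as in the $L_1$ lemma, not $4C_q$. The paper's remark following this lemma says precisely this: transplanting the $p=1$ technique ``would generate an additional summand of order $(n\mnorm{\prt})^{q+1}$.'' That ``the $\omega_{q-1}$ factors cancel'' is not a distinguishing feature — they cancel identically in the $L_1$ case — so there is no reason the constant comes out cleaner here.

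The paper avoids the endpoint pieces altogether by a different decomposition. It splits $\sph^q$ into $U_1=\setcond{y}{\sin(d(x,y))\leq\sin(d(\xi,y))}$ and $U_2=\sph^q\setminus U_1$, parametrizes $U_1$ with north pole $x$ and $U_2$ with north pole $\xi$, and uses concavity of $\sin$ on $[0,\piup]$: on $U_1$ the minimum of $\sin$ over the interval $[d(x,y),d(\xi,y)]$ is attained at $d(x,y)=\tau$, so $\sin(\tau)^{q-1}\abs{v_n'(\cos t)\sin t}\leq\abs{v_n'(\cos t)\sin(t)^q}$ throughout. This makes \cref{thm:dvlp} applicable to the \emph{entire} inner integral at once — no splitting into $[0,2\mnorm{\prt}]$, middle, and $[\piup-2\mnorm{\prt},\piup]$ is needed — and each of the two pieces is bounded by $2\,d(x,\xi)\,C_q(n+q^2)$, giving exactly $4C_q(n+q^2)\mnorm{\prt}$. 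Your argument, once the constant is corrected, would still suffice for \cref{thm:deterministic-mz} (which only uses $5C_q$), but it does not prove the lemma as stated.
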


\begin{proof}
The triangle inequality yields
\begin{align*}
&\norel\mnorm{T_{\smpl,\prt,n}(f)}_{\mu_q,\infty}\\
&=\mu_q\text{-}\esssup_{x\in\sph^q}\Big|\int_{\sph^q}\sum_{\xi\in\smpl}\mathbf{1}_{\Z_\xi}(x)(v_n(x\cdot y)-v_n(\xi\cdot y))f(y)\dd\mu_q(y)\Big|\\
&\leq \mu_q\text{-}\esssup_{x\in\sph^q}\Big|\int_{\sph^q}\sum_{\xi\in\smpl}\mathbf{1}_{\Z_\xi}(x)(v_n(x\cdot y)-v_n(\xi\cdot y))\dd\mu_q(y)\Big|\ \mnorm{f}_{\mu_q,\infty}\\
&\leq \mu_q\text{-}\esssup_{x\in\sph^q}\int_{\sph^q}\sum_{\xi\in\smpl}\mathbf{1}_{\Z_\xi}(x)\abs{(v_n(x\cdot y)-v_n(\xi\cdot
y))}\dd\mu_q(y)\ \mnorm{f}_{\mu_q,\infty}.
\end{align*}
For $\mu_q$-almost all $x\in\sph^q$, there exists a unique element $\xi\in \smpl$ with $x\in \Z_\xi$.
For such pairs $(x,\xi)$, the integral $\int_{\sph^q}\sum_{\xi\in\smpl}\mathbf{1}_{\Z_\xi}(x)\abs{(v_n(x\cdot y)-v_n(\xi\cdot y))}\dd\mu_q(y)$
reduces to
\begin{equation}
\int_{\sph^q}\abs{v_n(x\cdot y)-v_n(\xi\cdot y)}\dd \mu_q(y)=\sum_{j=1}^2\int_{\sph^q}\mathbf{1}_{U_j}(y)\abs{v_n(x\cdot y)-v_n(\xi\cdot y)}\dd \mu_q(y)\label{eq:split}
\end{equation}
where we split $\sph^q$ into the two sets $U_1\defeq\setcond{y\in\sph^q}{\sin(d(x,y))\leq \sin(d(\xi,y))}$ and $U_2\defeq \sph^q\setminus U_1$, 
see \cref{fig:sphere-decomposition} for an illustration.

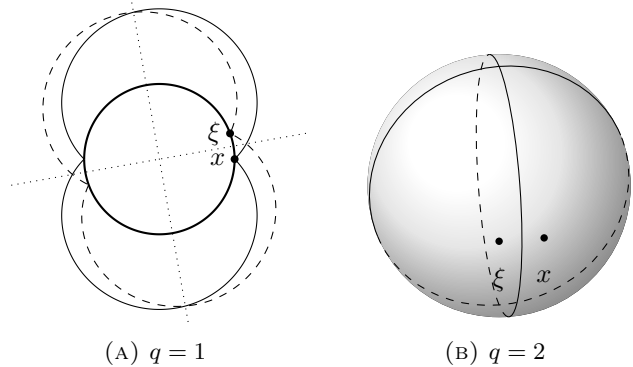
\begin{figure}[ht]
\begin{center}
\subcaptionbox{$q=1$}{
\begin{tikzpicture}
\draw[domain=-pi:pi,samples=500] plot ({deg(\x)}:{1+sin(abs(\x r))});
\draw[domain=-pi:pi,samples=500,dashed] plot ({deg(\x +pi/9)}:{1+sin(abs(\x r))});
\draw[domain=-pi:pi,samples=500,thick] plot ({deg(\x)}:{1});
\fill ({deg(pi/9)}:{1}) circle(1.5pt) node[left]{$\xi$};
\fill ({0}:{1}) circle(1.5pt) node[left]{$x$};
\draw[dotted] ({deg(pi/18)}:{-2})--({deg(pi/18)}:{2});
\draw[dotted] ({deg(10*pi/18)}:{-2.2})--({deg(10*pi/18)}:{2.2});
\end{tikzpicture}
}\quad
\subcaptionbox{$q=2$}{
\begin{tikzpicture}[scale=0.7,every node/.style={minimum size=1cm}]

\def\R{2.5} 
\def\angEl{25} 
\def\angAz{-100} 
\def\angPhiOne{10} 
\def\angPhiTwo{100} 
\def\angBeta{30} 

\pgfmathsetmacro\H{\R*cos(\angEl)} 
\LatitudePlane[xzplane]{\angEl}{\angAz}
\LongitudePlane[pzplane]{\angEl}{\angPhiOne}
\LongitudePlane[qzplane]{\angEl}{\angPhiTwo}
\LatitudePlane[equator]{\angEl}{0}
\fill[ball color=white!10] (0,0) circle (\R); 
\coordinate (O) at (0,0);
\DrawLongitudeCircle[\R]{\angPhiOne} 
\DrawLongitudeCircle[\R]{\angPhiTwo} 
\LatitudeCircle[\R]{0} 
\path[xzplane] (-90:\R) coordinate (P);
\fill (P) circle(2pt) node[below]{$\xi$};
\path[xzplane] (-70:\R) coordinate (Q);
\fill (Q) circle(2pt) node[below]{$x$};
\end{tikzpicture}
}
\end{center}
\caption{For non-antipodal points $x$ and $\xi$ on $\sph^q$, the sets $U_1$ and $U_2$ each take two opposite quarters of the sphere.
In the left panel the dashed thin line shows the sine of the geodesic distance to $\xi$ and the solid thin one depicts sine of the geodesic distance to $x$.
In the right panel, the dashed and solid thin lines show the boundary of $U_1$.}\label{fig:sphere-decomposition}
\end{figure}

Denoting by $y(\tau,\varphi)=\cos(\tau)x+\sin(\tau)\tilde{y}$, $\tau\in[0,\piup]$, $\tilde{y}\in\setcond{z\in\sph^q}{x\cdot z=0}\cong \sph^{q-1}$ the polar coordinates of $y\in\sph^q$ with respect to $x$ as the north pole we obtain
\begin{align*}
&\norel \int_{\sph^q}\mathbf{1}_{U_1}(y)\abs{v_n(x\cdot y)-v_n(\xi\cdot y)}\dd \mu_q(y)\\
&=\int_{\sph^{q-1}}\int_0^\piup \mathbf{1}_{U_1}(y(\tau,\tilde{y}))\abs{v_n(x\cdot y(\tau,\tilde{y}))-v_n(\xi\cdot y(\tau,\tilde{y}))}\sin(\tau)^{q-1}\dd \tau\,\dd\mu_{q-1}(\tilde{y})\\
&=\int_{\sph^{q-1}}\int_0^\piup \mathbf{1}_{U_1}(y(\tau,\tilde{y}))\int_{[d(x,y(\tau,\tilde{y})),d(\xi,y(\tau,\tilde{y}))]}\hspace{-1.2cm}\abs{v_n^\prime(\cos(t))\sin(t)}\,\dd t\ \sin(\tau)^{q-1}\dd \tau\,\dd\mu_{q-1}(\tilde{y}).
\end{align*}
As the sine function is concave on $[d(x,y(\tau,\tilde{y})),d(\xi,y(\tau,\tilde{y}))]\subseteq[0,\piup]$, it attains its minimum on the boundary, which is $d(x,y(\tau,\tilde{y}))=\tau$ in the case of $y(\tau,\tilde{y})\in U_1$, and thus $\sin(\tau)\leq \sin(t)$ for all $t\in [d(x,y(\tau,\tilde{y})),d(\xi,y(\tau,\tilde{y}))]$.
This leads to the upper bound
\begin{align*}
&\int_{\sph^{q-1}}\int_0^\piup\int_{[d(x,y(\tau,\tilde{y})),d(\xi,y(\tau,\tilde{y}))]}\abs{v_n^\prime(\cos(t))\sin(t)^q}\dd t \dd \tau\dd\mu_{q-1}(\tilde{y})\\
&\leq \int_{\sph^{q-1}}\int_0^\piup\int_0^{d(x,\xi)}\abs{v_n^\prime(\cos(t+\tau))\sin(t+\tau)^q}\dd t \dd \tau\dd\mu_{q-1}(\tilde{y})\\
&\leq \omega_{q-1}\int_0^\piup\int_0^{d(x,\xi)}\abs{v_n^\prime(\cos(t+\tau))\sin(t+\tau)^q}\dd t \dd \tau\\
&= \omega_{q-1} \int_0^{d(x,\xi)}\int_0^\piup\abs{v_n^\prime(\cos(t+\tau))\sin(t+\tau)^q} \dd \tau\dd t.\\
\intertext{Utilizing the periodicity of $(v_n^\prime\circ \cos)\sin^q$ and \cref{thm:dvlp}, we obtain}
&= \omega_{q-1} \int_0^{d(x,\xi)}\int_t^{\piup+t}\abs{v_n^\prime(\cos(\tau))\sin(\tau)^q}\dd \tau\dd t\\
&\leq \omega_{q-1} \cdot d(x,\xi) \cdot 2 \int_0^{\piup}\abs{v_n^\prime(\cos(\tau))\sin(\tau)^q}\dd \tau\\
&\leq 2d(x,\xi)C_q(n+q^2),
\end{align*}

The same manipulations can be applied to the second summand in \eqref{eq:split} but with $\xi$ as the north pole.
We obtain 
\begin{align*}
&\norel\int_{\sph^q}\abs{v_n(x\cdot y)-v_n(\xi\cdot y)}\dd \mu_q(y)\\
&=\sum_{j=1}^2\int_{\sph^q}\mathbf{1}_{U_j}(y)\abs{v_n(x\cdot y)-v_n(\xi\cdot y)}\dd \mu_q(y)\\
&\leq 4\mnorm{\prt}C_q(n+q^2)
\end{align*}
This means that
\begin{equation*}
\norel\mnorm{T_{\smpl,\prt,n}(f)}_{\mu_q,\infty}\leq \mnorm{f}_{\mu_q,\infty}4C_q(n+q^2)\mnorm{\prt}
\end{equation*}
which finishes the proof.
\end{proof}

\begin{Bem}
A modification of the technique used for the case $p=1$ in the second step of the preceding proof can also be applied to the case $p=\infty$.
In contrast to the above strategy we would generate an additional summand of order $(n\mnorm{\prt})^{q+1}$.
\end{Bem}

Now we are ready to prove \cref{thm:deterministic-mz}.

\begin{proof}[Proof of \cref{thm:deterministic-mz}]
We first show that 
\begin{equation}
\abs{\mnorm{P}_{\mu_q,p}-\mnorm{P}_{\mu_q(\smpl,\prt),p}}\leq \mnorm{T_{\smpl,\prt,n}(P)}_{\mu_q,p}.\label{eq:NormEst}
\end{equation}
holds for all $1\leq p\leq\infty$ and every $P\in\Pi_n^q$.
For $1\leq p<\infty$, the triangle inequality, the reproducing property of $v_n$, and Hölder's inequality give
\begin{align*}
&\norel\abs{\mnorm{P}_{\mu_q,p}-\mnorm{P}_{\mu_q(\smpl,\prt),p}}\\
&=\abs{\lr{\sum_{\xi\in\smpl}\int_{\Z_\xi}\abs{P(x)}^p\dd\mu_q(x)}^{\frac{1}{p}}- \lr{\sum_{\xi\in\smpl}\int_{\Z_\xi}\abs{P(\xi)}^p\dd\mu_q(x)}^{\frac{1}{p}}}\nonumber\\
&\leq\lr{\sum_{\xi\in\smpl}\int_{\Z_\xi}\abs{P(x)-P(\xi)}^p\dd\mu_q(x)}^{\frac{1}{p}}\\
&=\lr{\sum_{\xi\in\smpl}\int_{\Z_\xi}\abs{\int_{\sph^q}v_n(x\cdot y)P(y)\,\dd\mu_q(y)-\int_{\sph^q}v_n(\xi\cdot y)P(y)\,\dd\mu_q(y)}^p\dd\mu_q(x)}^{\frac{1}{p}}\\
&=\lr{\sum_{\xi\in\smpl}\int_{\Z_\xi}\abs{\int_{\sph^q}(v_n(x\cdot y)-v_n(\xi\cdot y))P(y)\dd\mu_q(y)}^p\dd\mu_q(x)}^{\frac{1}{p}}\\
&=\lr{\int_{\sph^q}\abs{\int_{\sph^q}\sum_{\xi\in\smpl}\mathbf{1}_{\Z_\xi}(x)(v_n(x\cdot y)-v_n(\xi\cdot y))P(y)\,\dd\mu_q(y)}^p\dd\mu_q(x)}^{\frac{1}{p}}\\
&=\mnorm{T_{\smpl,\prt,n}(P)}_{\mu_q,p}.
\end{align*}
If $p=\infty$, the same arguments yield
\begin{align*}
&\norel\abs{\mnorm{P}_{\mu_q,\infty}-\mnorm{P}_{\mu_q(\smpl,\prt),\infty}}\\
&\leq\sup_{\xi\in\smpl}\mu_q\text{-}\esssup_{x\in\Z_\xi}\abs{P(x)-P(\xi)}\\
&\leq\sup_{\xi\in\smpl}\mu_q\text{-}\esssup_{x\in\Z_\xi}\abs{\int_{\sph^q}(v_n(x\cdot y)-v_n(\xi\cdot y))P(y)\dd\mu_q(y)}\\
&=\mu_q\text{-}\esssup_{x\in\sph^q}\abs{\int_{\sph^q}\sum_{\xi\in\smpl}\mathbf{1}_{\Z_\xi}(x)(v_n(x\cdot y)-v_n(\xi\cdot y))P(y)\dd\mu_q(y)}\\
&=\mnorm{T_{\smpl,\prt,n}(P)}_{\mu_q,1}
\end{align*}
which proves \eqref{eq:NormEst}.\\[\baselineskip] 
In order to show that the linear operator $T_{\smpl,\prt,n}:L_p(\sph^q,\mu_q)\to L_p(\sph^q,\mu_q)$ is bounded for every $p\in [1,\infty]$ with operator norm less or equal to $\eta$ we first note that this follows for $p=1$ and $p=\infty$ from \cref{thm:bounded-operator-ell-1,thm:bounded-operator-ell-infty} and $5C_q(n+q^2)\mnorm{\prt}\leq \eta$.
(Note that $\frac{2^{q+3}}{q\Gamma(\frac{q}{2})\Gamma(\frac{q}{2}+1)}+4 C_q<5C_q$ for $q\in\NN$.)
For $1<p<\infty$, the statement follows by the Riesz--Thorin interpolation theorem.

\emph{Step 3: Conclude the assertion.}
From steps 1 and 2, we have
\begin{equation*}
\abs{\mnorm{f}_{\mu_q,p}-\mnorm{f}_{\mu_q(\smpl,\prt),p}}\leq \eta\mnorm{f}_{\mu_q,p}    
\end{equation*}
for all $f\in\Pi_n^q$ whenever $5C_q(n+q^2)\mnorm{\prt}\leq \eta$.
This is equivalent to the assertion.
\end{proof}

The condition $5C_q(n+q^2)\mnorm{\prt}\leq \eta$ appearing in \cref{thm:deterministic-mz} gives a lower bound on the number $N$ of samples through volumetric arguments of the partition.
Namely, if $\mnorm{\prt}\leq\frac{\eta}{5C_q(n+q^2)}\eqdef r$, then $\mu(\Z)\leq \omega_{q-1}\int_0^{r} \sin(t)^{q-1}\dd t$ for each $\Z\in\prt$, and as $\prt$ is a partition of $\sph^q$, the cardinality of $\prt$ is 
\begin{equation*}
N\geq \frac{\omega_q}{\lr{\omega_{q-1}\int_0^{r} \sin(t)^{q-1}\dd t}} \gtrsim_q r^{-q}.
\end{equation*}

As a corollary, we obtain a seemingly partition-free variant of \cref{thm:deterministic-mz} with the upper bound on the partition norm $\mnorm{\prt}$ replaced by an upper bound on the mesh norm $\delta_\smpl$.
It relies on the construction of a partition $\prt$ from the sample set $\smpl$ such that the partition norm and the mesh norm satisfy a two-sided inequality, and hiding the partition in the weights of the discretized norm.
\begin{Kor}
Let $n,q\in\NN$ and $\eta\in \mathopen]0,1\mathclose[$.
Let further $\smpl\subset\sph^q$ be a finite set satisfying
\begin{equation*}
40C_qq\sqrt{2q(q+1)}(n+q^2)\delta_\smpl\leq \eta
\end{equation*}
with $C_q$ as in \cref{eq:constant}.
Then there exist non-negative numbers $a_\xi$, $\xi\in\smpl$, such that
\begin{equation*}
(1-\eta)\mnorm{f}_{\mu_q, p}\leq\lr{\sum_{\xi\in\smpl}a_\xi\abs{f(\xi)}^p}^{\frac{1}{p}}\leq (1+\eta)\mnorm{f}_{\mu_q,p}.
\end{equation*}
for all $p\in[1,\infty]$ and $f\in\Pi_n^q$.
\end{Kor}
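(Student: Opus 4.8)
The plan is to deduce the corollary directly from \cref{thm:deterministic-mz} by manufacturing, out of the point set $\smpl$ alone, a partition $\prt$ of $\sph^q$ with which $\smpl$ is compatible and whose partition norm $\mnorm{\prt}$ is controlled by the mesh norm $\delta_\smpl$; the weights will then be the surface areas $a_\xi\defeq\mu_q(\Z_\xi)$ of the patches, so that $\bigl(\sum_{\xi\in\smpl}a_\xi\abs{f(\xi)}^p\bigr)^{1/p}=\mnorm{f}_{\mu_q(\smpl,\prt),p}$ and \cref{thm:deterministic-mz} applies verbatim. The most transparent choice of partition is the geodesic Voronoi partition of $\smpl$: setting
\begin{equation*}
\Z_\xi\defeq\setcond{x\in\sph^q}{d(x,\xi)\leq d(x,\xi')\text{ for every }\xi'\in\smpl},\qquad\xi\in\smpl,
\end{equation*}
and $\prt\defeq\setcond{\Z_\xi}{\xi\in\smpl}$, I would first check that $\prt$ is a partition in the sense of \cref{sec:prelim} (the $\Z_\xi$ are closed, hence measurable, cover $\sph^q$, and have pairwise disjoint interiors) and that the pair $(\smpl,\prt)$ is compatible: if $\xi'\in\Z_\xi$ then $d(\xi',\xi)\leq d(\xi',\xi')=0$ forces $\xi'=\xi$, so $\smpl\cap\Z_\xi=\setn{\xi}$, while $\xi$ possesses a neighbourhood of points strictly closer to $\xi$ than to any other sample point, so $\xi\in\inte(\Z_\xi)$ and in particular $a_\xi=\mu_q(\Z_\xi)>0$.

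The geometric core is the estimate $\mnorm{\prt}\leq 2\delta_\smpl$. Indeed, for $x\in\Z_\xi$ there is, by definition of the mesh norm, a sample point $\xi^\ast$ with $d(x,\xi^\ast)\leq\delta_\smpl$, and the Voronoi property then gives $d(x,\xi)\leq d(x,\xi^\ast)\leq\delta_\smpl$; hence $d(x,y)\leq d(x,\xi)+d(\xi,y)\leq 2\delta_\smpl$ for all $x,y\in\Z_\xi$. Since $40q\sqrt{2q(q+1)}\geq 10$, the hypothesis $40C_qq\sqrt{2q(q+1)}(n+q^2)\delta_\smpl\leq\eta$ therefore implies $5C_q(n+q^2)\mnorm{\prt}\leq 10C_q(n+q^2)\delta_\smpl\leq\eta$, so \cref{thm:deterministic-mz} applied to the compatible pair $(\smpl,\prt)$ yields
\begin{equation*}
(1-\eta)\mnorm{f}_{\mu_q,p}\leq\lr{\sum_{\xi\in\smpl}a_\xi\abs{f(\xi)}^p}^{\frac1p}\leq(1+\eta)\mnorm{f}_{\mu_q,p}
\end{equation*}
for all $p\in[1,\infty]$ and all $f\in\Pi_n^q$, with the usual maximum interpretation at $p=\infty$ (legitimate because every $a_\xi$ is strictly positive). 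This is the assertion.

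The Voronoi argument in fact establishes the statement under a hypothesis weaker than the displayed one; to land on the precise constant $8q\sqrt{2q(q+1)}$ one would instead use a partition subordinate to $\smpl$ built from the recursive zonal equal area partition of \cite[Theorem~3.1.3]{Leopardi2007}, accepting a dimension-dependent Lipschitz factor in the passage from $\delta_\smpl$ to $\mnorm{\prt}$. I do not anticipate a genuine obstacle: the only delicate points are the treatment of degenerate configurations (points that coincide or lie on the boundary of their own patch — neither of which actually occurs for the Voronoi partition) and the careful tracking of the dimension-dependent constants through whichever partition construction is chosen.
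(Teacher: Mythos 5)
Your proof is correct, and it takes a genuinely different route from the paper. The paper does not build the partition by hand: it invokes \cite[Proposition~3.2]{MhaskarNaWa2001}, which produces a partition $\prt$ compatible with a \emph{subset} $\smpl_0\subseteq\smpl$ together with the two-sided bound $\delta_\smpl\leq\mnorm{\prt}\leq 8q\sqrt{2q(q+1)}\,\delta_\smpl$ (this is where the constant in the hypothesis comes from, not from Leopardi's equal-area construction as you speculate — that one is used in the subsequent corollary and in \cref{thm:leopardi-mz}); it then sets $a_\xi=\mu_q(\Z_\xi)$ for $\xi\in\smpl_0$ and $a_\xi=0$ otherwise, which is precisely why the corollary is phrased with non-negative rather than positive weights. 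Your geodesic Voronoi construction is more elementary and self-contained, keeps every sample point with a strictly positive weight, and yields the dimension-free bound $\mnorm{\prt}\leq 2\delta_\smpl$, so it establishes the conclusion already under the weaker hypothesis $10C_q(n+q^2)\delta_\smpl\leq\eta$; the routine verifications you defer (measurability and pairwise disjoint interiors of the closed Voronoi cells — the latter because bisectors of distinct points are great subspheres with empty interior — and $\xi\in\inte(\Z_\xi)$ for distinct sample points) all go through, and the measure-zero overlaps are harmless since the proof of \cref{thm:deterministic-mz} only uses $\sum_{\xi\in\smpl}\mathbf{1}_{\Z_\xi}=1$ $\mu_q$-almost everywhere. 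What the paper's citation buys in exchange for the worse, dimension-dependent constant is essentially only that the compatibility and diameter bounds come prepackaged; your argument is arguably the cleaner one for this particular corollary.
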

\begin{proof}
Let $\smpl=\setn{\xi_1,\ldots,\xi_N}\subseteq\sph^q$.
Then \cite[Proposition~3.2]{MhaskarNaWa2001} gives a partition $\prt=\setn{\Z_1,\ldots,\Z_M}$ of $\sph^q$ for some $M\leq N$ such that there exists an $M$-element subset $\smpl_0$ of $\smpl$ for which the pair $(\smpl_0,\prt)$ is compatible, and the inequality
\begin{equation*}
\delta_\smpl\leq\mnorm{\prt}\leq 8q\sqrt{2q(q+1)}\delta_\smpl
\end{equation*}
is satisfied.
We plug this into $40C_qq\sqrt{2q(q+1)}(n+q^2)\delta_\smpl\leq \eta$ to obtain $5C_q\mnorm{\prt}(n+q^2)\leq \eta$.
Set
\begin{equation*}
a_\xi=\begin{cases}\mu_q(\Z_\xi)&\text{if }\xi\in\smpl_0,\\0&\text{else,}\end{cases} 
\end{equation*}
and apply \cref{thm:deterministic-mz}.
\end{proof}

Via equal-area partitions of the sphere, we can also get an equal-weight version of \cref{thm:deterministic-mz}.
\begin{Kor}
Let $n,q\in\NN$ and $\eta\in \mathopen]0,1\mathclose[$.
For $\alpha_q\defeq 8\lr{\frac{\omega_qq}{\omega_{q-1}}}^{\frac{1}{q}}$ and
\begin{equation*}
N\geq \lr{\frac{5C_q(n+q^2)\alpha_q}{\eta}}^q,
\end{equation*}
there exists a finite subset $\smpl=\setn{\xi_1,\ldots,\xi_N}$ of $\sph^q$ with
\begin{equation*}
(1-\eta)\mnorm{f}_{\mu_q, p}\leq \lr{\frac{\omega_q}{N}\sum_{j=1}^N\abs{f(\xi_j)}^p}^{\frac{1}{p}}\leq (1+\eta)\mnorm{f}_{\mu_q,p}
\end{equation*}
for all $p\in[1,\infty]$ and $f\in\Pi_n^q$.
\end{Kor}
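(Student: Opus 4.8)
The plan is to take $\smpl$ to be a system of representatives of an \emph{equal-area} partition of $\sph^q$ and then apply \cref{thm:deterministic-mz}. First I would invoke the recursive zonal equal-area partition of Leopardi \cite[Theorem~3.1.3]{Leopardi2007}: for every $N\in\NN$ there is a partition $\prt=\setn{\Z_1,\ldots,\Z_N}$ of $\sph^q$ into cells of equal measure $\mu_q(\Z_j)=\frac{\omega_q}{N}$ whose partition norm satisfies $\mnorm{\prt}\leq \alpha_q N^{-1/q}$ with $\alpha_q=8\lr{\frac{\omega_qq}{\omega_{q-1}}}^{1/q}$. (The scale is natural: a set of measure $\frac{\omega_q}{N}$ is contained in no geodesic cap of radius below $\bigl(\frac{q\omega_q}{\omega_{q-1}N}\bigr)^{1/q}$, since a cap of radius $r$ has measure at most $\frac{\omega_{q-1}}{q}r^q$; the factor $8$ reflects that Leopardi's cells are not exact caps but still have comparable diameter.) These cells are topological balls with pairwise disjoint interiors, so choosing a point $\xi_j\in\inte(\Z_j)$ for each $j$ and setting $\smpl=\setn{\xi_1,\ldots,\xi_N}$ produces a compatible pair $(\smpl,\prt)$ with $\Z_{\xi_j}=\Z_j$.

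Next I would verify the hypothesis of \cref{thm:deterministic-mz}. The assumption $N\geq\lr{\frac{5C_q(n+q^2)\alpha_q}{\eta}}^q$ is equivalent to $N^{-1/q}\leq\frac{\eta}{5C_q(n+q^2)\alpha_q}$, hence
\begin{equation*}
5C_q(n+q^2)\mnorm{\prt}\leq 5C_q(n+q^2)\,\alpha_q N^{-1/q}\leq\eta .
\end{equation*}
Thus \cref{thm:deterministic-mz} applies to the compatible pair $(\smpl,\prt)$, and since all weights $\mu_q(\Z_{\xi_j})$ equal $\frac{\omega_q}{N}$, the discretized norm $\bigl(\sum_{\xi\in\smpl}\mu_q(\Z_\xi)\abs{f(\xi)}^p\bigr)^{1/p}$ equals exactly $\bigl(\frac{\omega_q}{N}\sum_{j=1}^N\abs{f(\xi_j)}^p\bigr)^{1/p}$; for $p=\infty$ the common weight and the factor $(\omega_q/N)^{1/p}$ drop out in the limit, matching the essential supremum. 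This yields the asserted two-sided bound for all $p\in[1,\infty]$ and $f\in\Pi_n^q$.

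The only substantive ingredient beyond \cref{thm:deterministic-mz} is the equal-area partitioning theorem together with its explicit diameter constant; the rest is bookkeeping. Accordingly, the point requiring care is that the constant $\alpha_q$ in the statement genuinely dominates the partition-norm bound furnished by \cite[Theorem~3.1.3]{Leopardi2007}; once that is matched, the corollary follows immediately from \cref{thm:deterministic-mz}.
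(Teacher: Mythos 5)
Your proposal is correct and follows essentially the same route as the paper: take an equal-area partition with partition norm at most $\alpha_q N^{-1/q}$ (the paper cites \cite[Theorem~3.1.3]{Leopardi2007} together with \cite{BilykLa2017} for the explicit constant $\alpha_q$), pick one interior point per cell to form a compatible pair, check that the hypothesis $5C_q(n+q^2)\mnorm{\prt}\leq\eta$ follows from the assumed lower bound on $N$, and apply \cref{thm:deterministic-mz} with the equal weights $\mu_q(\Z_j)=\omega_q/N$. The only caveat is that your justification of the value of $\alpha_q$ is a plausibility argument rather than a verification; the paper instead takes this constant directly from the cited literature.
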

\begin{proof}
Using , there exists  exists a number $\alpha_q$ (which may only depend on $q$) and a partition $\prt=\setn{\Z_1,\ldots,\Z_N}$ of $\sph^q$ such that $\mu_q(\Z_j)=\frac{\omega_q}{N}$ and $\mnorm{\prt}\leq \alpha_q N^{-\frac{1}{q}}$.
According to \cite[\rus{Теорема}~2.3]{BilykLa2017}, one may have $\alpha=8\lr{\frac{\omega_qq}{\omega_{q-1}}}^{\frac{1}{q}}$.
Plugging this into $N\geq \lr{\frac{5C_q(n+q^2)\alpha_q}{\eta}}^q$, we get $5C_q(n+q^2)\mnorm{\prt}\leq \eta$.
For assembling the set $\smpl$, just take one interior point of each $\Z\in\prt$.
It remains to apply \cref{thm:deterministic-mz}.
\end{proof}


\section{How good are random points?}\label{chap:final}
In the previous section we have seen that the performance of sample points (regarding the disturbance parameter $\eta>0$) improves with smaller partition norm $\mnorm{\prt}$ of the corresponding partition, or, differently, with a smaller mesh norm $\delta_{\smpl}$.
In this section we want to look at mesh norms for random points drawn independently and identically distributed from the sphere $\sph^q$.
We show that, when drawing enough points, we obtain a good Marcinkiewicz--Zygmund inequality for all $1\leq p \leq \infty$ simultaneously.
Compared to the consideration in \cref{sec:random_first} we obtain a worse scaling of the number of points with respect to $\eta$.
Note, that in \cref{sec:random_first} we considered only $p=2$ and observed a scaling of $t^{-2}$ independently of the dimension $q$ and explicit constants on the price of an additional logarithm in the dimension.

The main result in this section (\cref{thm:leopardi-mz}) utilizes insights about a classical problem of probability theory: the coupon collector problem.
At each time step, the eponymous coupon collector receives a coupon, chosen at random among $M$ different types.
Unsurprisingly, the more coupons the collector receives, the higher the probability that the collection contains each type at least once.
The time step after which the collector possesses each type at least once can be modeled as a random variable $T_M:(\Omega,\mathcal{F},\PP)\to\RR$, where $(\Omega,\mathcal{F},\PP)$ is some probability space, see \cite[p.~36]{Doerr2020}.
As we do not know the number of draws a priori, we should therefore start with an infinite product of the uniform probability space over the set $\setn{1,\ldots,M}$ of coupon types.
To circumvent this, we raise the probability space to a sufficiently high power $t$, and model the event of not having all $M$ types of coupons after $t$ draws directly as a subset of a \emph{finite} probability space.
\begin{Prop}\label{thm:coupon}
Let $\eps\in \mathopen]0,1\mathclose[$, $M\in\NN$, and $t>M\log\lr{\frac{M}{\eps}}$.
On the finite set $\Omega\defeq \setn{1,\ldots,M}^t$, a probability measure $\PP$ is given by the $t$-fold product measure on the uniform probability measure on $\setn{1,\ldots,M}$.
Set
\begin{equation*}
A_{t,M}\defeq\setcond{(x_1,\ldots,x_t)\in\Omega}{\setn{x_1,\ldots,x_t}=\setn{1,\ldots,M}}.
\end{equation*}
Then $\PP(\Omega\setminus A_{t,M})<\eps$.
\end{Prop}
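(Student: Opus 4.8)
The plan is to bound the probability of the \enquote{bad} event by a union bound over the $M$ coupon types. For each fixed type $i\in\setn{1,\dots,M}$, let $B_i$ denote the event that the coupon of type $i$ never appears among the $t$ draws, i.e.\ $B_i\defeq\setcond{(x_1,\dots,x_t)\in\Omega}{x_j\neq i\text{ for all }j=1,\dots,t}$. Since $\Omega\setminus A_{t,M}=\bigcup_{i=1}^M B_i$, subadditivity of $\PP$ gives $\PP(\Omega\setminus A_{t,M})\leq\sum_{i=1}^M\PP(B_i)$.

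Next I would compute $\PP(B_i)$ directly from the product structure. Since the draws are independent and uniform on $\setn{1,\dots,M}$, the probability that a single draw avoids type $i$ is $1-\frac1M$, so $\PP(B_i)=\lr{1-\frac1M}^t$ for every $i$. Hence
\begin{equation*}
\PP(\Omega\setminus A_{t,M})\leq M\lr{1-\frac1M}^t.
\end{equation*}

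Finally I would use the elementary inequality $1-x\leq\ee^{-x}$ with $x=\frac1M$ to get $M(1-\frac1M)^t\leq M\ee^{-t/M}$, and then invoke the hypothesis $t>M\log\lr{\frac{M}{\eps}}$, which is equivalent to $\ee^{-t/M}<\frac{\eps}{M}$, to conclude $M\ee^{-t/M}<\eps$. Chaining these gives $\PP(\Omega\setminus A_{t,M})<\eps$, as claimed.

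There is no real obstacle here; the argument is entirely routine. The only points requiring a modicum of care are making sure the union-bound decomposition is exactly $\Omega\setminus A_{t,M}=\bigcup_i B_i$ (a coupon collection is incomplete precisely when at least one type is missing) and correctly exploiting the independence coming from the $t$-fold product measure so that $\PP(B_i)$ factorizes as $(1-\frac1M)^t$ rather than something more complicated. The strict inequality in the conclusion is inherited from the strict inequality $1-\frac1M<\ee^{-1/M}$ (equivalently from the strict hypothesis on $t$), so one should keep the inequalities strict at the appropriate step.
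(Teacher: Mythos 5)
Your proof is correct and follows essentially the same route as the paper: the bound $\PP(\Omega\setminus A_{t,M})\leq M\lr{1-\frac{1}{M}}^t$ (which the paper simply cites from Doerr), followed by $1-x\leq\ee^{-x}$ and the hypothesis on $t$. The only difference is that you derive the union-bound estimate explicitly rather than citing it, which makes the argument self-contained.
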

\begin{proof}
The assumption $t>M\log\lr{\frac{M}{\eps}}$ is equivalent to $M\exp(-\frac{t}{M})<\eps$.
Since $(1+a)^t\leq \eps(ax)$ for all $a,x\in\RR$, we have $M\lr{1-\frac{1}{M}}^t\leq M\exp(-\frac{t}{M})<\eps$.
Taking \cite[p.~36]{Doerr2020} into account, we have $\PP(\Omega\setminus A_{t,M})\leq M\lr{1-\frac{1}{M}}^t<\eps$.
\end{proof}

Now a probabilistic $L_p$-version of the Marcinkiewicz--Zygmund inequality can be given as follows.
\begin{Satz}\label{thm:leopardi-mz}
Let $n,q\in\NN$, $\eta,\eps\in \mathopen]0,1\mathclose[$, and set $\alpha_q\defeq 8\lr{\frac{\omega_qq}{\omega_{q-1}}}^{\frac{1}{q}}$.
Choose $N\in\NN$ large enough such that
\begin{equation}
5C_q\alpha_q\lr{\frac{N}{2\log(\frac{N}{\eps})}}^{-\frac{1}{q}}(n+q^2)<\eta.\label{eq:random-mz}
\end{equation}
Draw points $\xi_1,\ldots,\xi_N\in\sph^q$ independently and identically distributed according to $\sigma_q$.
Then with probability $\geq 1-\frac{1}{N}$ with respect to the product measure $\PP=\sigma_q^{\otimes N}$, there exists weights $w_1,\ldots,w_N>0$ such that $\sum_{j=1}^N w_j=1$ and 
\begin{equation*}
(1-\eta)\mnorm{P}_{\sigma_q,p}\leq \lr{\sum_{j=1}^N w_j \abs{P(\xi_j)}^p}^\frac{1}{p}\leq (1+\eta)\mnorm{P}_{\sigma_q,p}
\end{equation*}
for all $p\in [1,\infty]$ and all $P\in\Pi_n^q$.
\end{Satz}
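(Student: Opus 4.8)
The plan is to bring together the three ingredients announced in the introduction: the deterministic Marcinkiewicz--Zygmund inequality \cref{thm:deterministic-mz}, an equal-area partition of $\sph^q$, and the coupon collector estimate \cref{thm:coupon}. First I would fix an integer $M$ with
\begin{equation*}
\lr{\frac{5C_q(n+q^2)\alpha_q}{\eta}}^q\leq M<\frac{N}{2\log\lr{\frac{N}{\eps}}};
\end{equation*}
the existence of such an $M$ is precisely what hypothesis \eqref{eq:random-mz} guarantees (the actual extraction of the integer, and the manipulations of logarithms and roundings that accompany it, I would defer to the bookkeeping). By the equal-area partitioning result invoked in the corollaries above, there is then a partition $\prt=\setn{\Z_1,\ldots,\Z_M}$ of $\sph^q$ with $\mu_q(\Z_k)=\omega_q/M$ for every $k$ and $\mnorm{\prt}\leq\alpha_q M^{-1/q}$, so the lower bound on $M$ yields $5C_q(n+q^2)\mnorm{\prt}\leq\eta$, which is the hypothesis of \cref{thm:deterministic-mz}.

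Second, I would pass from the sample points to their cell indices. Let $\kappa(\xi)$ denote the index of the ($\mu_q$-almost surely unique) cell containing $\xi$. Since $\sigma_q(\Z_k)=1/M$ for every $k$ and the $\xi_j$ are independent, the vector $(\kappa(\xi_1),\ldots,\kappa(\xi_N))$ is uniformly distributed on $\setn{1,\ldots,M}^N$, and under this identification the event $\mathcal{E}$ that every cell $\Z_k$ is hit by at least one of the points corresponds to the event in \cref{thm:coupon} that all $M$ coupon types have been collected after $t=N$ draws. As the bound $M<N/(2\log(N/\eps))$ implies $N>M\log(MN)$, \cref{thm:coupon} with tolerance $\tfrac{1}{N}$ in place of $\eps$ applies and shows $\PP(\mathcal{E})\geq 1-\tfrac{1}{N}$.

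Third, on $\mathcal{E}$ — intersected with the full-measure event that the $\xi_j$ are pairwise distinct and each lies in the interior of its cell — I would refine $\prt$ to make it compatible with the \emph{whole} set $\smpl=\setn{\xi_1,\ldots,\xi_N}$: inside each cell $\Z_k$, subdivide $\Z_k$ into the (geodesic) Voronoi cells of the sample points it contains. This produces a partition $\prt'$ of $\sph^q$ in which every patch holds exactly one $\xi_j$ in its interior, and, since each patch of $\prt'$ lies inside a patch of $\prt$, one has $\mnorm{\prt'}\leq\mnorm{\prt}$ and hence $5C_q(n+q^2)\mnorm{\prt'}\leq\eta$. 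Applying \cref{thm:deterministic-mz} to the compatible pair $(\smpl,\prt')$ and setting $w_j\defeq\mu_q(\Z'_{\xi_j})/\omega_q$, where $\Z'_{\xi_j}\in\prt'$ is the patch of $\xi_j$, yields strictly positive weights with $\sum_{j=1}^N w_j=1$; dividing the conclusion of \cref{thm:deterministic-mz} by $\omega_q^{1/p}$ and using $\mnorm{P}_{\mu_q,p}=\omega_q^{1/p}\mnorm{P}_{\sigma_q,p}$ for $p<\infty$ (and $\mnorm{P}_{\mu_q,\infty}=\mnorm{P}_{\sigma_q,\infty}$) gives the asserted two-sided inequality for all $p\in[1,\infty]$ simultaneously.

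I expect the only real obstacle to be the bookkeeping in the first step: producing a \emph{single} integer $M$ that is at once fine enough for \cref{thm:deterministic-mz} and small enough relative to $N$ for the coupon collector to succeed with confidence $1-\tfrac{1}{N}$. This balance is exactly what the somewhat opaque inequality \eqref{eq:random-mz} encodes, and verifying it is a routine exercise with $\log(N/\eps)$ and floor functions. The Voronoi refinement in the third step is the device that lets us retain all $N$ sample points with strictly positive weights instead of discarding the surplus ones or perturbing the estimate.
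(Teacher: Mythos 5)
Your proposal is correct in substance and follows the paper's overall architecture (equal-area partition of $M$ cells, coupon collector, deterministic MZ), but it departs from the paper in the final step, and that departure is worth recording. The paper keeps the $M$-cell partition, applies \cref{thm:deterministic-mz} to \emph{one} representative per cell, and then observes that since the resulting inequality holds for the representative minimizing and the one maximizing $\abs{P(\xi)}^p$ in each cell, it also holds for the in-cell average $\frac{1}{m_j}\sum_{\xi\in\Z_j}\abs{P(\xi)}^p$; this produces the weights $w_j=\frac{1}{Mm_j}$, equal within each cell. You instead refine each cell by the geodesic Voronoi diagram of the sample points it contains, obtaining (almost surely) a partition compatible with \emph{all} $N$ points and with $\mnorm{\prt'}\leq\mnorm{\prt}$, and then invoke \cref{thm:deterministic-mz} once; the weights become the normalized areas of the Voronoi subcells. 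Both arguments are valid: the paper's averaging trick avoids any discussion of whether the refined sets form a legitimate compatible pair, while your refinement is conceptually cleaner (a single application of the deterministic theorem) and, incidentally, handles the success probability more faithfully --- your coupon-collector step with tolerance $\frac{1}{N}$, justified by $M<\frac{N}{2\log(N/\eps)}\Rightarrow N>M\log(MN)$, actually delivers the stated $1-\frac{1}{N}$, whereas the paper's proof (with $M=\floor{N/\log(N/\eps)}$ and tolerance $\eps$) literally concludes $1-\eps$.

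One caveat on the bookkeeping you defer: the existence of an \emph{integer} $M$ in the window $\bigl[\lr{5C_q(n+q^2)\alpha_q/\eta}^q,\ \frac{N}{2\log(N/\eps)}\bigr)$ does not follow from \eqref{eq:random-mz} alone, since that hypothesis only makes the window nonempty, not of length $\geq 1$. The standard repair is to fix $M$ by an explicit rounding --- the paper takes $M=\floor{N/\log(N/\eps)}$ and uses $\floor{z}\geq z/2$ to get $M\geq\frac{N}{2\log(N/\eps)}$, so the partition is fine enough, and $M\leq N/\log(N/\eps)$, so the coupon bound holds with tolerance $\eps$ --- rather than to postulate an integer in an interval. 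If you want to keep your $1-\frac{1}{N}$ conclusion you would take $M=\ceil{\lr{5C_q(n+q^2)\alpha_q/\eta}^q}$ and verify $N>M\log(MN)$ directly from \eqref{eq:random-mz}; this needs one extra line but no new ideas.
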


\begin{proof}
Let $M\defeq\floor{\frac{N}{\log(\frac{N}{\eps})}}$.
From $M\leq \frac{N}{\log(\frac{N}{\eps})}$, we infer $N>M\log(\frac{N}{\eps})>M\log(\frac{M}{\eps})$.
Furthermore, we have $\floor{z}\geq \frac{1}{2}z$ for all $z\in\RR$ with $z>1$.
For $z=\frac{N}{\log(\frac{N}{\eps})}$, we obtain $M=\floor{\frac{N}{\log(\frac{N}{\eps})}}\geq \frac{N}{2\log(\frac{N}{\eps})}$.
Thus \cref{eq:random-mz} implies
\begin{equation*}
5C_q\alpha_qM^{-\frac{1}{q}}(n+q^2)\leq\eta.
\end{equation*}
Using \cite[Theorem~3.1.3]{Leopardi2007} and \cite[\rus{Теорема}~2.3]{BilykLa2017}, there exists a partition $\prt=\setn{\Z_1,\ldots,\Z_M}$ of $\sph^q$ such that $\sigma_q(\Z_j)=\frac{1}{M}$ and $\mnorm{\prt}\leq \alpha_q M^{-\frac{1}{q}}$.
It follows that
\begin{equation*}
5C_q(n+q^2)\mnorm{\prt}<\eta.
\end{equation*}
Thus the conditions of \cref{thm:deterministic-mz} is are met if there is an $M$-element subset $\smpl\subseteq\setn{\xi_1,\ldots,\xi_N}$ such that $(\smpl,\prt)$ is compatible.
For this, we use \cref{thm:coupon}.
It implies that after drawing $N>M\log(\frac{M}{\eps})$ points $\xi_1,\ldots,\xi_N\in\sph^q$ independently and identically distributed according to $\sigma_q$, the probability that each patch $\Z_j\in\prt$ contains a non-zero number $m_j$ of the points $\xi_1,\ldots,\xi_N$ in its interior is $\geq 1-\eps$.
Collect one of the points in each patch in a set $\smpl$, and apply \cref{thm:deterministic-mz}.
This implies that, with probability $\geq 1-\eps$, we have
\begin{equation}
(1-\eta)\mnorm{P}_{\sigma_q,p}\leq \lr{\frac{1}{M}\sum_{\xi\in\smpl} \abs{P(\xi)}^p}^\frac{1}{p}\leq (1+\eta)\mnorm{P}_{\sigma_q,p}\label{eq:leopardi-mz-intermediate}
\end{equation}
for all $p\in [1,\infty]$ and all $P\in\Pi_n^q$.
This result is independent of what point from a given patch $\Z_j$ is put into the set $\smpl$.
In particular, \eqref{eq:leopardi-mz-intermediate} holds true if $\xi\in \Z_j$ is selected such that $\abs{P(\xi)}^p$ is smallest possible or largest possible.
Therefore \eqref{eq:leopardi-mz-intermediate} will also hold if we replace the contribution $\abs{P(\xi)}^p$ from each patch by the average $\frac{1}{m_j}\sum_{\xi\in \Z_j}\abs{P(\xi)}^p$, i.e.,
\begin{equation*}
(1-\eta)\mnorm{P}_{\sigma_q,p}\leq \lr{\frac{1}{M}\sum_{j=1}^M\frac{1}{m_j}\sum_{\xi\in \Z_j} \abs{P(\xi)}^p}^\frac{1}{p}\leq (1+\eta)\mnorm{P}_{\sigma_q,p}
\end{equation*}
or, equivalently,
\begin{equation*}
(1-\eta)\mnorm{P}_{\sigma_q,p}\leq \lr{\sum_{j=1}^M\sum_{\xi\in \Z_j}\frac{1}{Mm_j} \abs{P(\xi)}^p}^\frac{1}{p}\leq (1+\eta)\mnorm{P}_{\sigma_q,p}.
\end{equation*}
Now set $w_j\defeq \frac{1}{Mm_j}>0$ and observe that
\begin{equation*}
\sum_{j=1}^Nw_j=\sum_{j=1}^M \sum_{\xi\in \Z_j}\frac{1}{Mm_j}=\sum_{j=1}^M \frac{m_j}{Mm_j}=1.\qedhere
\end{equation*}
\end{proof}

\textbf{Acknowledgements.} F.F.\ was partly supported by projects AsoftXm ZT-I-PF-4-018 and BRELMMM ZT-I-PF-4-024 of the Helmholtz Imaging Platform (HIP).
The work of T.J.\ was supported by the German Science Foundation (DFG) through grant 1742243256 - TRR 96 and DFG Ul-403/2-1.
Also, the authors want to thank Feng Dai for pointing out reference \cite{BrownDa2005}.

\providecommand{\bysame}{\leavevmode\hbox to3em{\hrulefill}\thinspace}
\providecommand{\MR}{\relax\ifhmode\unskip\space\fi MR }
\providecommand{\MRhref}[2]{%
  \href{http://www.ams.org/mathscinet-getitem?mr=#1}{#2}
}
\providecommand{\href}[2]{#2}

\end{document}